\def\stf#1#2{\left[#1\atop#2\right]}
\newtheorem{theorem}{Theorem}
\newtheorem{Prop}{Proposition}
\newtheorem{Cor}{Corollary}
\newtheorem{Lem}{Lemma}
\begin{document}

\title{Polynomial identities and Fermat quotients
}

\author{
Takao Komatsu
\\
\small Department of Mathematical Sciences, School of Science\\[-0.8ex]
\small Zhejiang Sci-Tech University\\[-0.8ex]
\small Hangzhou 310018 China\\[-0.8ex]
\small \texttt{komatsu@zstu.edu.cn}\\\\
B. Sury\\
\small Stat-Math Unit\\[-0.8ex]
\small Indian Statistical Institute\\[-0.8ex]
\small 8th Mile Mysore Road\\[-0.8ex]
\small Bangalore 560059 India\\[-0.8ex]
\small \texttt{sury@isibang.ac.in}
}

\date{
%\small Submitted: September 15, 2023;  Accepted: October 25, 2023.\\
\small MR Subject Classifications: Primary 11B65; Secondary 11A07, 05A10, 11B50, 11B73
}

\maketitle

\begin{abstract}
We prove some polynomial identities from which we deduce congruences
modulo $p^2$ for the Fermat quotient $\frac{2^p-2}{p}$ for any odd
prime $p$ (Proposition 1 and Theorem 1). These congruences are
simpler than the one obtained by Jothilingam in 1985
(\cite{Jothilingam}) which involves listing quadratic residues in
some order. On the way, we also observe some more congruences for
the Fermat quotient that generalize Eisenstein's classical
congruence (Lemma 1). Using such polynomial identities, we obtain
some sums involving harmonic numbers. We also prove formulae for
binomial sums of harmonic numbers of higher order (Theorem 2).\\
{\bf Keywords:} Polynomial identities, Fermat quotients, Harmonic
numbers
\end{abstract}

Throughout, we use the standard notation $\frac{a}{b} \equiv
\frac{c}{d}$ modulo $m$ for a positive integer $m$ relatively prime
to $bd$ if, $\frac{a}{b}-\frac{c}{d}=\frac{mp}{q}$ with
$\gcd(m,q)=1$.

\section{Generalizing Eisenstein's congruence for Fermat quotients}

\noindent Eisenstein had proved for an odd prime that, modulo $p$,
we have
$$\frac{2^{p-1}-1}{p} \equiv \sum_{j:odd,~j<p-1} \frac{1}{j}.$$
It is possible to obtain this and some more congruences by
exploiting an elementary polynomial identity as follows.

\noindent {\bf Observation.}\\
For any integer $m \geq 1$, we have the evident polynomial identity
\begin{equation}
\frac{x^m-x}{m} =\sum_{r=1}^{m-1} {m \choose r}
\frac{(x+1)^r(-1)^{m-r}}{m} +\frac{(x+1)^m-x+(-1)^m}{m}.
\label{eq:0}
\end{equation}

As a consequence, we observe:

\begin{Lem}
For any odd prime $p$, we have the following congruence modulo $p$:
\begin{align}
\frac{2^{p-1}-1}{p} &\equiv \frac{1}{2}
\sum_{j=0}^{p-2}\frac{1}{{p-2 \choose j}}. \label{eq:E}
\end{align}
Further, for any integer $n \geq 2$ and any odd prime $p$, we have
the following congruences modulo $p$:
\begin{equation}
\frac{n^p-n}{p} \equiv - \sum_{r=1}^{p-1} \frac{1^r+2^r + \cdots +n^r}{r}.
\label{eq:A}
\end{equation}
\begin{equation}
\frac{n^p-n}{p} \equiv - \sum_{r=1}^{p-1} \frac{(-1)^r(1^r+2^r + \cdots+ (n-1)^r)}{r}.
\label{eq:AA}
\end{equation}
In particular, modulo $p$, we have:
\begin{align}
\frac{2^{p-1}-1}{p} &\equiv - \frac{1}{2} \sum_{j=1}^{p-1}\frac{2^j}{j},
\label{eq:B}\\
\frac{2^{p-1}-1}{p} &\equiv - \frac{1}{2}
\sum_{j=1}^{p-1}\frac{(-1)^j}{j}. \label{eq:C}\\
\frac{2^{p-1}-1}{p} &\equiv \sum_{j{\rm :odd},~j<p-1}\frac{1}{j}
\equiv - \frac{1}{2} \sum_{j=1}^{(p-1)/2} \frac{1}{j} \label{eq:D}
\end{align}
\end{Lem}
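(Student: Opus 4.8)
The plan is to specialize the polynomial identity \eqref{eq:0} to $m=p$ and to read off from it a congruence relating the integer $F(x):=\frac{x^p-x}{p}$ (an integer by Fermat's little theorem) at consecutive arguments. The engine is the elementary fact that for $1\le r\le p-1$ one has $\binom{p}{r}=\frac{p}{r}\binom{p-1}{r-1}$, hence the exact equality $\frac{1}{p}\binom{p}{r}=\frac{1}{r}\binom{p-1}{r-1}$, together with $\binom{p-1}{r-1}\equiv(-1)^{r-1}\pmod p$ (each factor $p-i\equiv-i$). Since $p$ is odd, $(-1)^{p-r}=(-1)^{r-1}$, so the coefficient of $(x+1)^r$ on the right of \eqref{eq:0} satisfies $\binom{p}{r}\frac{(-1)^{p-r}}{p}\equiv\frac{1}{r}\pmod p$. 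As the leftover term equals $\frac{(x+1)^p-(x+1)}{p}=F(x+1)$, identity \eqref{eq:0} collapses modulo $p$ to
\[
F(x+1)-F(x)\equiv-\sum_{r=1}^{p-1}\frac{(x+1)^r}{r}\pmod p
\]
for every integer $x$; every denominator $r$ is a unit mod $p$, so this is a legitimate congruence in the sense fixed in the introduction.

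Next I would telescope. Summing the displayed congruence over $x=0,1,\dots,n-1$ and using $F(0)=0$ gives $F(n)\equiv-\sum_{r=1}^{p-1}\frac{1}{r}\sum_{x=0}^{n-1}(x+1)^r=-\sum_{r=1}^{p-1}\frac{1^r+2^r+\cdots+n^r}{r}$, which is exactly \eqref{eq:A}. For \eqref{eq:AA} I would rerun the same argument through the companion identity obtained from $\big((x-1)+1\big)^p=x^p$, namely $\frac{x^p-x}{p}=\sum_{r=1}^{p-1}\binom{p}{r}\frac{(x-1)^r}{p}+\frac{(x-1)^p-x+1}{p}$; here the coefficient reduces to $\frac{(-1)^{r-1}}{r}$, the leftover term is $F(x-1)$, and telescoping over $x=1,\dots,n$ yields $F(n)\equiv\sum_{r=1}^{p-1}\frac{(-1)^{r-1}}{r}\big(1^r+\cdots+(n-1)^r\big)$, i.e.\ \eqref{eq:AA} after writing $(-1)^{r-1}=-(-1)^r$.

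The remaining congruences are specializations. Putting $n=2$ in \eqref{eq:A} and \eqref{eq:AA}, and invoking the standard fact $\sum_{k=1}^{p-1}\frac1k\equiv0\pmod p$ (the inverses permute $1,\dots,p-1$, whose sum is $\frac{p(p-1)}{2}\equiv0$), turns $1^k+2^k$ into $2^k$ and leaves only the term $1^r$, giving $\frac{2^p-2}{p}\equiv-\sum_{k=1}^{p-1}\frac{2^k}{k}$ and $\frac{2^p-2}{p}\equiv-\sum_{r=1}^{p-1}\frac{(-1)^r}{r}$; dividing by $2$ (a unit) produces \eqref{eq:B} and \eqref{eq:C}. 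For \eqref{eq:D} I would split \eqref{eq:C} by parity: $\sum_{j}\frac{(-1)^j}{j}=\sum_{j\,\mathrm{even}}\frac1j-\sum_{j\,\mathrm{odd}}\frac1j$, and since $\sum_{j\,\mathrm{even}}\frac1j=\frac12\sum_{i=1}^{(p-1)/2}\frac1i\equiv-\sum_{j\,\mathrm{odd}}\frac1j$, both claimed forms of Eisenstein's congruence fall out. Finally \eqref{eq:E} follows from \eqref{eq:C} once I compute $\binom{p-2}{j}\equiv(-1)^j(j+1)\pmod p$ (again replacing each $p-i$ by $-i$), whence $\frac{1}{\binom{p-2}{j}}\equiv\frac{(-1)^j}{j+1}$ and $\frac12\sum_{j=0}^{p-2}\frac{1}{\binom{p-2}{j}}\equiv-\frac12\sum_{i=1}^{p-1}\frac{(-1)^i}{i}$, matching \eqref{eq:C}.

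I do not anticipate a genuine obstacle: the whole argument is driven by the single reduction $\frac1p\binom{p}{r}\equiv\frac{(-1)^{r-1}}{r}$ plus telescoping. The points needing the most care are purely bookkeeping---checking that every denominator that appears ($r$, $2$, $j+1$) is prime to $p$ so that the fractional congruences make sense under the convention stated at the outset, and confirming that the two small combinatorial evaluations $\binom{p-1}{r-1}\equiv(-1)^{r-1}$ and $\binom{p-2}{j}\equiv(-1)^j(j+1)$ are correct. The only step that is not a mechanical rerun of the $(x+1)$ computation is setting up the $(x-1)$ companion identity needed for \eqref{eq:AA}, but it is the mirror image of \eqref{eq:0} and requires no new idea.
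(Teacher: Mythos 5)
Your proposal is correct, and its engine --- the identity (\ref{eq:0}) at $m=p$, the reduction $\frac{1}{p}\binom{p}{r}\equiv\frac{(-1)^{r-1}}{r}\pmod p$, and telescoping in the argument --- is exactly the paper's. There are two places where you genuinely diverge. First, for (\ref{eq:AA}) you set up the mirror identity coming from $((x-1)+1)^p$ and telescope forward, whereas the paper reuses its one congruence (\ref{spadesuit}) at the negative arguments $a=-2,-3,\dots$; these are equivalent computations (since $F(-n)=-F(n)$ for odd $p$) and neither buys anything over the other. Second, and more substantially, for (\ref{eq:E}) the paper imports the identity $\sum_{r=0}^n\binom{n}{r}^{-1}=\frac{n+1}{2^n}\sum_{i=0}^n\frac{2^i}{i+1}$ from \cite{Sury93}, specializes to $n=p-2$, and matches the result against (\ref{eq:B}); you instead evaluate $\binom{p-2}{j}\equiv(-1)^j(j+1)\pmod p$ directly and reduce (\ref{eq:E}) to (\ref{eq:C}). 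Your route is more elementary and self-contained (no external combinatorial identity needed), at the cost of losing the exact rational identity that the paper's method passes through; the computation $\binom{p-2}{j}\equiv(-1)^j(j+1)$ is correct, and the resulting chain $\frac12\sum_{j=0}^{p-2}\binom{p-2}{j}^{-1}\equiv-\frac12\sum_{i=1}^{p-1}\frac{(-1)^i}{i}$ does land on (\ref{eq:C}) as claimed. All remaining specializations ((\ref{eq:B}), (\ref{eq:C}) from $n=2$, and the parity split for (\ref{eq:D})) check out.
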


\noindent {\bf Proof.} It follows from (\ref{eq:0}) for $m=p$ that:
$$
\frac{a^p-a}{p} = \frac{(a+1-1)^p-a}{p} =\sum_{r=1}^{p-1} {p \choose r} \frac{(a+1)^r(-1)^{p-r}}{p} +\frac{(a+1)^p-(a+1)}{p}.
$$
We claim that, modulo $p$,
\begin{equation}
\frac{a^p-a}{p} \equiv \sum_{r=1}^{p-1}\frac{(a+1)^r}{r} + \frac{(a+1)^p-(a+1)}{p}.
\label{spadesuit}
\end{equation}
In the above, we used the observation that if $p$ is an odd prime and $0< r <p$, then the integer $\frac{1}{p} {p \choose r} \equiv \frac{(-1)^{r-1}}{r}\mod p$.
This is so because
$$
\frac{1}{p} {p \choose r} = \frac{(p-1)(p-2) \cdots (p-r+1)}{r!}
\equiv \frac{(-1)^{r-1}(r-1)!}{r!} = \frac{(-1)^{r-1}}{r}.
$$
Putting
$a=0$ gives the well-known congruence $\sum_{r=1}^{p-1} \frac{1}{r} \equiv 0\mod p.$\\
Thus, congruence (6) is an equivalent version of Eisenstein's congruence.\\
Putting $a=1$ gives congruence (\ref{eq:B}).\\
Putting $a=-2$ gives the congruence (\ref{eq:C}).\\
Inductively, from (\ref{spadesuit}), one gets then that
$$
\frac{n^p-n}{p} \equiv - \sum_{r=1}^{p-1} \frac{2^r + \cdots +n^r}{r}\mod p.
$$
When $p$ is an odd prime, $\sum_{r=1}^{p-1} \frac{1}{r}\equiv 0\mod p$ (indeed, it is even zero modulo $p^2$ when $p>3$ by Wolstenholme's theorem).
Thus, we have the more symmetric form asserted as (\ref{eq:A}).
Finally, (\ref{eq:AA}) is gotten similarly to (\ref{eq:A}) inductively from (\ref{spadesuit}) by putting $a = -2, -3, -4$ etc.\\
Clearly, using the fact that $\sum_{j \leq p-1} \frac{1}{j} \equiv 0\mod p$, (\ref{eq:C}) implies the two congruences in (\ref{eq:D}).
To prove the congruence (\ref{eq:E}), let us use the following identities which were proved in \cite{Sury93}:
$$
\sum_{r=0}^n \frac{1}{{n \choose r}} = \frac{n+1}{2^n} \sum_{i=0}^n
\frac{2^i}{i+1} = \frac{n+1}{2^n} \sum_{j{\rm :odd}} {n+1 \choose j}
\frac{1}{j}.
$$
If we put in $n=p-2$ for a prime $p \geq 3$, the second expression becomes
$$
\frac{p-1}{2^{p-2}} \sum_{i=0}^{p-2} \frac{2^i}{i+1} =
\frac{p-1}{2^{p-1}} \sum_{j=1}^{p-1} \frac{2^j}{j} \equiv -\sum_{j=1}^{p-1} \frac{2^j}{j},
$$
which gives the congruence (\ref{eq:E}) on using (\ref{eq:B}). This
completes the proof of the lemma.

\section{New congruences for Fermat quotient modulo $p^2$}

\noindent We prove some polynomial identities which are then used to
obtain two different congruences for Fermat quotients modulo $p^2$;
these are simpler than the ones obtained in \cite{Jothilingam}.

\begin{Lem} For any odd positive integer $n$,
$$
\sum_{r=1}^n \frac{(-1)^{r-1}x^r}{r} = \sum_{r=1}^n {n \choose r}
\frac{(-1)^{r-1}(x+1)^r}{r} + \sum_{r=1}^n \frac{(-1)^r}{r}
{n\choose r}.
$$
\end{Lem}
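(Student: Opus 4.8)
The plan is to recognize both sides as polynomials in $x$ and to compare their derivatives. Write $f(x) = \sum_{r=1}^n \frac{(-1)^{r-1}x^r}{r}$ for the left-hand side and $g(x) = \sum_{r=1}^n \binom{n}{r}\frac{(-1)^{r-1}(x+1)^r}{r}$ for the first sum on the right-hand side; the remaining term $C := \sum_{r=1}^n \frac{(-1)^r}{r}\binom{n}{r}$ is a constant independent of $x$. So the identity to be proved is precisely $f(x) = g(x) + C$, and since $f$ and $g$ are polynomials it suffices to show that $f'(x) = g'(x)$ and that the two sides agree at a single convenient value of $x$.

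First I would differentiate term by term. For the left-hand side, $f'(x) = \sum_{r=1}^n (-1)^{r-1}x^{r-1} = \sum_{s=0}^{n-1}(-x)^s = \frac{1 - (-x)^n}{1+x}$, and here the hypothesis that $n$ is odd is used decisively: $(-x)^n = -x^n$, so $f'(x) = \frac{1 + x^n}{1+x}$, which is a genuine polynomial. For the first right-hand sum, substituting $u = x+1$ and differentiating gives $g'(x) = \sum_{r=1}^n \binom{n}{r}(-1)^{r-1} u^{r-1} = \frac{1}{u}\bigl(1 - (1-u)^n\bigr)$ by the binomial theorem; since $1 - u = -x$ and $n$ is odd, $(1-u)^n = -x^n$, whence $g'(x) = \frac{1 + x^n}{x+1}$. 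Thus $f'(x) = g'(x)$ as polynomials.

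Consequently $f(x) - g(x)$ is a constant, and to pin it down I would evaluate at $x = 0$: there $f(0) = 0$ because every term carries a positive power of $x$, while $g(0) = \sum_{r=1}^n \binom{n}{r}\frac{(-1)^{r-1}}{r} = -C$. Hence $f(0) - g(0) = C$, so $f(x) - g(x) = C$ identically, which is exactly the claimed identity.

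The only step requiring care, and the place where the oddness of $n$ is genuinely needed, is the simplification of the geometric and binomial sums into the common closed form $\frac{1+x^n}{1+x}$; in particular one must note that this rational expression is actually a polynomial, the apparent pole at $x=-1$ being removable precisely because $n$ is odd, so that the equality $f'=g'$ holds as an identity of polynomials and not merely on the locus $x \neq -1$. Everything else is routine term-by-term differentiation together with the single evaluation at $x=0$.
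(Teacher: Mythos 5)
Your proof is correct and is essentially the paper's own argument run in the opposite direction: the paper establishes the polynomial identity $\frac{x^n+1}{x+1}=\sum_{r=1}^n\binom{n}{r}(-1)^{r-1}(x+1)^{r-1}$ and then integrates, fixing the constant at $x=0$, whereas you differentiate both sides, observe that both derivatives equal $\frac{1+x^n}{1+x}$ (using the oddness of $n$ in the same place), and fix the constant at $x=0$. No substantive difference.
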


\noindent {\bf Proof.} Consider the evident identity
$$
x^n =  \sum_{r=1}^n {n \choose r}(x+1)^r (-1)^{r-1}-1.
$$
As $n$ is odd, we have
$$
1-x+x^2- \cdots + x^{n-1}= \frac{x^n+1}{x+1}= \sum_{r=1}^n {n\choose r} (-1)^{r-1} (x+1)^{r-1}.
$$
Integration gives
$$
x - \frac{x^2}{2} + \cdots + \frac{x^n}{n}
= \sum_{r=1}^{n} {n \choose r} (-1)^{r-1} \frac{(x+1)^r}{r} + c
$$
for some constant $c$. Putting $x=0$, we obtain $c=
\sum_{r=1}^n\frac{(-1)^r}{r} {n \choose r}$. Thus, we have the
asserted polynomial identity:
$$
\sum_{r=1}^n \frac{(-1)^{r-1}x^r}{r} = \sum_{r=1}^n {n \choose r}
\frac{(-1)^{r-1}(x+1)^r}{r} + \sum_{r=1}^n \frac{(-1)^r}{r} {n\choose r}.
$$

\noindent Note in passing that  by comparing coefficients of $x^2$
and $x^3$, we get
\begin{align*}
n &= \sum_{r=1}^{n-1} (-1)^r {n \choose r} (r-1);\\
(n-1)(n-2) &= 2 + \sum_{r=1}^{n-1} (-1)^r {n \choose r}(r-1)(r-2).
\end{align*}

\noindent Now, we use the above polynomial identity to obtain two
different congruences for the Fermat quotient modulo $p^2$ for an
odd prime $p$.

\begin{Prop}
For an odd prime $p$,
$$
\frac{2^{p-1}-1}{p} \equiv  \sum_{r=1}^{p-1} \frac{-2^{r-1}}{r} \mod
{p^2}.
$$
\end{Prop}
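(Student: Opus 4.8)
The plan is to specialize the polynomial identity of Lemma 2 at $n=p$ and $x=-2$, and then extract the Fermat quotient by isolating the top-degree term. Since $p$ is odd, Lemma 2 applies with $n=p$. Substituting $x=-2$ (so that $x+1=-1$), the left-hand side becomes $\sum_{r=1}^{p}\frac{(-1)^{r-1}(-2)^r}{r}=-\sum_{r=1}^{p}\frac{2^r}{r}$, while on the right the two binomial sums combine: the coefficient of $\frac1r\binom pr$ is $(-1)^{r-1}(-1)^r+(-1)^r=(-1)^r-1$, which vanishes for even $r$ and equals $-2$ for odd $r$. This gives the exact identity
$$\sum_{r=1}^{p}\frac{2^{r-1}}{r}=\sum_{\substack{r=1\\ r\ \mathrm{odd}}}^{p}\frac{1}{r}\binom{p}{r}=:S.$$

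Next I would peel off the $r=p$ terms on each side. On the left, $\sum_{r=1}^{p}\frac{2^{r-1}}{r}=\sum_{r=1}^{p-1}\frac{2^{r-1}}{r}+\frac{2^{p-1}}{p}$; on the right the term $r=p$ (which is odd) contributes $\frac1p\binom pp=\frac1p$. Rearranging the identity yields $-\sum_{r=1}^{p-1}\frac{2^{r-1}}{r}=\frac{2^{p-1}}{p}-S$, so that subtracting the claimed value gives $\frac{2^{p-1}}{p}-S-\frac{2^{p-1}-1}{p}=\frac1p-S=-\sum_{\substack{r\ \mathrm{odd}\\ 1\le r\le p-2}}\frac1r\binom pr$. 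Hence the Proposition is equivalent to the single congruence $\sum_{r\ \mathrm{odd},\,1\le r\le p-2}\frac1r\binom pr\equiv0\pmod{p^2}$.

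For this last congruence, write $\frac1r\binom pr=\frac{p}{r^2}\binom{p-1}{r-1}$. The standard fact $\binom{p-1}{r-1}\equiv(-1)^{r-1}\pmod p$ (the same estimate already used in the proof of Lemma 1) gives $\binom{p-1}{r-1}\equiv1\pmod p$ for odd $r$, so $\frac1r\binom pr\equiv\frac{p}{r^2}\pmod{p^2}$, and the sum reduces modulo $p^2$ to $p\sum_{r\ \mathrm{odd},\,1\le r\le p-2}\frac1{r^2}$. Thus everything comes down to $\sum_{r\ \mathrm{odd},\,1\le r\le p-2}\frac1{r^2}\equiv0\pmod p$.

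I expect this second-order harmonic congruence to be the main obstacle, and I would establish it by an even/odd splitting. Starting from Wolstenholme's congruence $\sum_{r=1}^{p-1}\frac1{r^2}\equiv0\pmod p$, the pairing $r\leftrightarrow p-r$ (under which $\frac1{r^2}\equiv\frac1{(p-r)^2}$) gives $\sum_{k=1}^{(p-1)/2}\frac1{k^2}\equiv0\pmod p$; consequently the even part $\sum_{r\ \mathrm{even}}\frac1{r^2}=\frac14\sum_{k=1}^{(p-1)/2}\frac1{k^2}\equiv0$, and subtracting it from the full sum leaves the odd part $\equiv0$ as well. This splitting argument (like Wolstenholme's theorem itself) requires $p\ge5$, and that is precisely where the modulus-$p^2$ statement forces the restriction; the small case $p=3$ would have to be inspected on its own.
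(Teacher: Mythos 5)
Your proposal is correct and follows essentially the same route as the paper: specialize Lemma 2 at $n=p$, $x=-2$, reduce the claim to $\sum_{r\,\mathrm{odd},\,r\le p-2}\frac{1}{r}\binom{p}{r}\equiv 0\pmod{p^2}$, use $\frac{1}{r}\binom{p}{r}=\frac{p}{r^2}\binom{p-1}{r-1}\equiv\frac{p}{r^2}$ for odd $r$, and finish with $\sum_{r\,\mathrm{odd}}\frac{1}{r^2}\equiv\frac{1}{2}\sum_{r=1}^{p-1}\frac{1}{r^2}\equiv 0\pmod p$ for $p\ge 5$ via the pairing $r\leftrightarrow p-r$, treating $p=3$ separately. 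The only differences are cosmetic (you combine the two binomial sums on the right-hand side at the outset and compute the even part of the quadratic harmonic sum rather than equating odd and even parts directly), so the argument is sound as written.
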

\begin{proof}
In the polynomial identity
$$
\sum_{r=1}^n \frac{(-1)^{r-1}x^r}{r} = \sum_{r=1}^n {n \choose r}\frac{(-1)^{r-1}(x+1)^r}{r} + \sum_{r=1}^n \frac{(-1)^r}{r} {n\choose r},
$$
take $x=-2$. We obtain
$$
-\sum_{r=1}^p \frac{2^r}{r} = -\sum_{r=1}^{P-1} {p \choose r}
\frac{1}{r} - \frac{2}{p} + \sum_{r=1}^{p-1} {p \choose r}
\frac{(-1)^r}{r}.
$$
Rewriting this, we have
$$
\frac{2^p-2}{p} + \sum_{r=1}^{p-1} \frac{2^r}{r} = 2 \sum_{r{\rm :odd},~r<p} {p \choose r} \frac{1}{r}.
$$
Firstly, for $p=3$, the proposition follows by direct computation.
Therefore, the proposition will follow if we show that for $p \geq 5$,
$$
\sum_{r{\rm :odd},~ r<p} {p \choose r} \frac{1}{pr} \equiv 0\mod p.
$$
Now for $1 \leq r \leq p-2$ with $r$ odd, we have, modulo $p$,
$$
\frac{1}{pr} {p \choose r} = \frac{(p-1)(p-2) \cdots(p-r+1)}{r^2} \equiv \frac{1}{r^2}
$$
since $r$ is odd. Therefore,
$$
\sum_{r{\rm :odd},~ r<p} {p \choose r} \frac{1}{pr} \equiv
\sum_{r{\rm :odd},~r<p} \frac{1}{r^2} \equiv \frac{1}{2} \sum_{r=1}^{p-1}
\frac{1}{r^2}
$$
since $(p-r)^2 \equiv r^2\mod p$ and $p-r$ runs through the even integers $<p$ when $r$ runs through the odd integers $<p$. But,
$$
\sum_{r=1}^{p-1} \frac{1}{r^2} \equiv \sum_{d=1}^{p-1} d^2 =
\frac{(p-1)p(2p-1)}{6} \equiv 0
$$
if $p \geq 5$.
Therefore, the proposition is proved.
\end{proof}

\noindent We have already proved a congruence for $\frac{2^p-2}{p}$
modulo $p^2$. In 1985, Jothilingam \cite{Jothilingam} had proved a
congruence that involves an ordered choice of quadratic residues.
Below, we prove a different, simpler congruence. \vskip 3mm

\begin{Lem}
$\sum_{r=0}^{n-1} \frac{(1-x)^r}{r+1} = \sum_{r=0}^{n-1} {n \choose
r+1} \frac{(-1)^r}{r+1} \frac{x^{r+1}-1}{x-1}.$
\end{Lem}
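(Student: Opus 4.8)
The plan is to mirror the differentiate-and-integrate technique already used for Lemma 2. First I would clear the denominators to pass to a cleaner equivalent statement. Writing $k=r+1$ throughout and using $\frac{x^{k}-1}{x-1}=\frac{1-x^{k}}{1-x}$, multiplying the asserted identity through by $(1-x)$ turns it into the auxiliary identity
$$
\sum_{k=1}^{n}\frac{(1-x)^{k}}{k}=\sum_{k=1}^{n}\binom{n}{k}\frac{(-1)^{k-1}}{k}\bigl(1-x^{k}\bigr).
$$
Every term on the left carries a factor $(1-x)$, and each $1-x^{k}=(1-x)(1+x+\cdots+x^{k-1})$ does as well, so this is a genuine identity of polynomials in $x$. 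Since $\mathbb{Q}[x]$ is an integral domain, dividing it by the nonzero polynomial $(1-x)$ returns exactly the statement of the lemma; in particular no parity hypothesis on $n$ is required, in contrast to Lemma 2.

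To establish the auxiliary identity I would substitute $y=1-x$ and instead prove the symmetric form
$$
\sum_{k=1}^{n}\frac{y^{k}}{k}=\sum_{k=1}^{n}\binom{n}{k}\frac{(-1)^{k-1}}{k}\bigl(1-(1-y)^{k}\bigr).
$$
The starting point is the binomial evaluation $\sum_{k=0}^{n}\binom{n}{k}\bigl(-(1-s)\bigr)^{k}=s^{n}$, which rearranges to $\sum_{k=1}^{n}\binom{n}{k}(-1)^{k-1}(1-s)^{k}=1-s^{n}$, and hence, dividing by $(1-s)$,
$$
\sum_{k=1}^{n}\binom{n}{k}(-1)^{k-1}(1-s)^{k-1}=\frac{1-s^{n}}{1-s}=1+s+\cdots+s^{n-1}.
$$
Integrating both sides from $0$ to $y$ and using $\int_{0}^{y}(1-s)^{k-1}\,ds=\frac{1-(1-y)^{k}}{k}$ on the left and $\int_{0}^{y}s^{j}\,ds=\frac{y^{j+1}}{j+1}$ on the right yields precisely the symmetric form above, with no constant of integration since both sides vanish at $y=0$.

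Unwinding the substitution $y=1-x$ gives the auxiliary identity, and dividing by $(1-x)$ then gives the lemma after reindexing by $r=k-1$. There is no deep obstacle here; the one point that warrants care is the clearing of the $\frac{1}{x-1}$ denominators, that is, checking that both sides of the displayed identity are honest polynomials so that the integral identity may be divided cleanly by $(1-x)$ rather than merely evaluated away from $x=1$. Everything else is routine bookkeeping, and a quick check at $n=1,2$ confirms both the even and odd cases.
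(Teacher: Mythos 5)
Your proof is correct and follows essentially the same route as the paper: both arrive at the intermediate polynomial identity $\sum_{k=1}^n \frac{(1-x)^k}{k} = \sum_{k=1}^n \binom{n}{k}\frac{(-1)^k(x^k-1)}{k}$ by integrating a binomial/geometric identity and then divide by $(1-x)$ and reindex with $k=r+1$. Your use of a definite integral from $0$ to $y$ (avoiding the constant of integration that the paper fixes at $x=1$) and your explicit check that the division by $(1-x)$ is a genuine polynomial division are only cosmetic refinements of the paper's argument.
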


\noindent {\bf Proof.} Start with the elementary polynomial identity
$$
- \sum_{k=1}^n (1-x)^{k-1} = \sum_{k=1}^n {n \choose k} (-1)^k
x^{k-1}.
$$
Integrating this, we have
$$
\sum_{k=1}^n \frac{(1-x)^k}{k} = \sum_{k=1}^n {n \choose k}\frac{(-1)^k (x^k-1)}{k}.
$$
The above identity has been written after finding the constant of
integration by putting $x=1$. Rewriting the above identity by taking
$k=r+1$, we have the asserted polynomial identity:
\begin{equation}
\sum_{r=0}^{n-1} \frac{(1-x)^r}{r+1} = \sum_{r=0}^{n-1} {n \choose r+1} \frac{(-1)^r}{r+1} \frac{x^{r+1}-1}{x-1}.
\label{heartsuit}
\end{equation}

\noindent As an application, we get:

\begin{theorem}
For any odd prime $p$, we have
$$
\frac{2^{p-1}-1}{p} \equiv    \sum_{r{\rm :odd},~r<p} \frac{1}{r}  -
p \sum_{r=1}^{p-1} \frac{2^{r-1}}{r^2} \mod{p^2}.
$$
\end{theorem}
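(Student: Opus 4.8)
The plan is to apply the polynomial identity (\ref{heartsuit}) of Lemma 3 with $n=p$ and the substitution $x=2$. (The tempting choice $x=-1$, which turns the left side into a sum of powers of $2$, only reproduces Proposition 1, since the resulting binomial sum collapses to $p\sum_{s{\rm :odd}}1/s^2\equiv0\pmod{p^2}$; it is the substitution $x=2$ that exposes the Eisenstein sum.) With $x=2$ the factor $\frac{x^{r+1}-1}{x-1}$ becomes $2^{r+1}-1$, so, writing $s=r+1$, the identity reads
\[
\sum_{s=1}^{p}\frac{(-1)^{s-1}}{s}=\sum_{s=1}^{p}\binom{p}{s}\frac{(-1)^{s-1}}{s}\,(2^{s}-1).
\]
First I would peel off the $s=p$ terms from both sides; since $p$ is odd the left one is $1/p$ and the right one is $(2^p-1)/p$, and their difference is exactly $\frac{2^p-2}{p}=2\,\frac{2^{p-1}-1}{p}$. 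This yields the exact rational identity
\[
\sum_{s=1}^{p-1}\frac{(-1)^{s-1}}{s}=\sum_{s=1}^{p-1}\binom{p}{s}\frac{(-1)^{s-1}}{s}\,(2^{s}-1)+2\,\frac{2^{p-1}-1}{p}.
\]

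Now assume $p\ge5$. The next step is to pin down the two surviving sums modulo $p^2$. For the left side I would combine Wolstenholme's theorem $\sum_{s=1}^{p-1}\frac1s\equiv0\pmod{p^2}$ with the splitting $2\sum_{s{\rm :odd}}\frac1s=\sum_{s=1}^{p-1}\frac1s+\sum_{s=1}^{p-1}\frac{(-1)^{s-1}}{s}$, giving $\sum_{s=1}^{p-1}\frac{(-1)^{s-1}}{s}\equiv2\sum_{s{\rm :odd},\,s<p}\frac1s\pmod{p^2}$. For the binomial sum I would write $\binom{p}{s}=\frac{p}{s}\binom{p-1}{s-1}$ and use $\binom{p-1}{s-1}\equiv(-1)^{s-1}\pmod p$, so that $\binom{p}{s}\frac{(-1)^{s-1}}{s}=\frac{p(-1)^{s-1}}{s^2}\binom{p-1}{s-1}\equiv\frac{p}{s^2}\pmod{p^2}$; the explicit factor $p$ is what makes it enough to know $\binom{p-1}{s-1}$ only modulo $p$.

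Combining these, the binomial sum becomes $p\sum_{s=1}^{p-1}\frac{2^s-1}{s^2}\pmod{p^2}$, and here I would invoke the vanishing $\sum_{s=1}^{p-1}\frac1{s^2}\equiv0\pmod p$ (used already in Proposition 1), so that $p\sum_{s=1}^{p-1}\frac1{s^2}\equiv0\pmod{p^2}$ and the sum reduces to $p\sum_{s=1}^{p-1}\frac{2^s}{s^2}$. Substituting back gives $2\,\frac{2^{p-1}-1}{p}\equiv2\sum_{s{\rm :odd},\,s<p}\frac1s-p\sum_{s=1}^{p-1}\frac{2^s}{s^2}\pmod{p^2}$, and dividing by $2$ (a unit mod $p^2$) turns $\frac p2\,2^s$ into $p\,2^{s-1}$ and yields the asserted congruence. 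The main obstacle is the bookkeeping of $p$-adic orders: one must track which ingredients are needed only to first order in $p$ and which to second order, and the argument rests entirely on the two auxiliary vanishings $\sum1/s\equiv0\pmod{p^2}$ and $\sum1/s^2\equiv0\pmod p$, both of which fail at $p=3$ and force a separate direct verification there (one checks $\frac{2^2-1}{3}=1$ is congruent modulo $9$ to the right-hand side).
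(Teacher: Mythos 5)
Your proof is correct, but it takes a genuinely different route from the paper's. The paper proves the theorem by integrating the identity (\ref{heartsuit}) a second time, so that the weights $\frac{1}{(r+1)^2}$ appear on the left, and then substituting $x=-1$, $n=p$; the resulting congruence is manipulated modulo $p$ with an explicit overall factor $\frac1p$ and then multiplied by $p$. You instead substitute $x=2$, $n=p$ directly into (\ref{heartsuit}) with no further integration, peel off the $s=p$ terms to expose $\frac{2^p-2}{p}$ as an exact rational identity, and only then reduce modulo $p^2$, using $\binom{p}{s}=\frac{p}{s}\binom{p-1}{s-1}$ with $\binom{p-1}{s-1}\equiv(-1)^{s-1}\pmod p$ to produce the $p\sum 2^s/s^2$ term. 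Your version makes the $p$-adic bookkeeping very transparent (it is clear which inputs are needed modulo $p$ and which modulo $p^2$), and your observation that $x=-1$ in (\ref{heartsuit}) merely recovers Proposition 1 is accurate and clarifying; the price is an appeal to Wolstenholme's theorem $\sum_{s<p}\frac1s\equiv0\pmod{p^2}$, which the paper's proof of this theorem does not invoke (it needs only $\binom{p}{r+1}\equiv0\pmod p$), and the consequent need to verify $p=3$ separately --- which you correctly flag and which does check out, since $1-\bigl(1-\tfrac92\bigr)=\tfrac92\equiv0\pmod 9$ in the paper's sense of rational congruence. Both arguments are sound; yours is arguably the more elementary use of Lemma 3, the paper's the more self-contained modulo auxiliary congruences.
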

\begin{proof}
For any $n\geq 1$, integrating (\ref{heartsuit}) we have
$$
\sum_{r=0}^{n-1} \frac{(1-x)^{r+1}}{(r+1)^2} = \sum_{r=0}^{n-1} {n\choose r+1} \frac{(-1)^{r+1}}{r+1} \left(x+ \frac{x^2}{2}+ \cdots +\frac{x^{r+1}}{r+1}\right) + C,
$$
where the constant $C$ is obtained by putting $x=1$.
We obtain $C = \sum_{r=0}^{n-1} {n \choose r+1}\frac{(-1)^{r}}{r+1} (1+ \frac{1}{2}+ \cdots + \frac{1}{r+1})$.\\
Let us consider $x=-1$ and $n=p$, for an odd prime $p$. Since ${p\choose r+1} \equiv 0\mod p$ for $r<p-1$, we have
$$
\sum_{r=0}^{p-1} \frac{2^{r+1}}{(r+1)^2} \equiv \frac{2}{p}
\sum_{d{\rm :odd},~ d \leq p} \frac{1}{d}\mod p.
$$
Clearly, this is the congruence
$$
\frac{2-2^p}{p^2} \equiv  \sum_{r=1}^{p-1} \frac{2^r}{r^2} -\frac{2}{p} \sum_{r{\rm :odd},~ r<p} \frac{1}{r}\mod p,
$$
which gives, on multiplying by $p$, the asserted congruence modulo $p^2$ in the proposition.
\end{proof}

\begin{Cor}
$$
\sum_{k=1}^n {n \choose k} \frac{(-1)^{k+1}}{k+1} =\frac{n}{n+1}.
$$
$$
H_n:=\sum_{k=1}^n \frac{1}{k} = \sum_{k=1}^n {n \choose k}
\frac{(-1)^{k+1}}{k}.
$$
\end{Cor}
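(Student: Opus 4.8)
The plan is to extract both identities from the machinery already in place, specializing the polynomial identities at convenient points rather than introducing anything new; the two displayed equalities come most naturally from two different specializations. For the expression for $H_n$, I would set $x=0$ in the identity (\ref{heartsuit}) of Lemma 3,
$$
\sum_{r=0}^{n-1} \frac{(1-x)^r}{r+1} = \sum_{r=0}^{n-1} {n \choose r+1} \frac{(-1)^r}{r+1} \frac{x^{r+1}-1}{x-1}.
$$
On the left each summand becomes $\frac{1}{r+1}$, so the left side is $\sum_{r=0}^{n-1}\frac{1}{r+1}=H_n$. On the right the factor $\frac{x^{r+1}-1}{x-1}$ evaluates to $\frac{-1}{-1}=1$ at $x=0$ for every $r\ge 0$, leaving $\sum_{r=0}^{n-1}{n\choose r+1}\frac{(-1)^r}{r+1}$; the substitution $k=r+1$ turns this into $\sum_{k=1}^{n}{n\choose k}\frac{(-1)^{k+1}}{k}$, which is exactly the asserted formula for $H_n$.

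For the first identity I would integrate the binomial theorem directly. Expanding $(1-x)^n=\sum_{k=0}^{n}{n\choose k}(-1)^k x^k$ and integrating both sides over $[0,1]$ gives $\frac{1}{n+1}$ on the left and $\sum_{k=0}^{n}{n\choose k}\frac{(-1)^k}{k+1}$ on the right. Isolating the $k=0$ term, which equals $1$, and multiplying through by $-1$ then yields
$$
\sum_{k=1}^{n}{n\choose k}\frac{(-1)^{k+1}}{k+1}=1-\frac{1}{n+1}=\frac{n}{n+1},
$$
as claimed.

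Both derivations are routine; the only points demanding care are the evaluation of the degenerate factor $\frac{x^{r+1}-1}{x-1}$ at $x=0$ (where the denominator is nonzero, so no limiting argument is needed) and the index shift $k=r+1$, together with remembering to strip off the $k=0$ term before comparing with the stated right-hand side. I do not expect any substantive obstacle.
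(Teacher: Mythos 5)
Your proof is correct, but it reaches the two identities by a different route than the paper. The paper integrates the identity $\sum_{k=1}^n \frac{(1-x)^k}{k} = \sum_{k=1}^n {n \choose k}\frac{(-1)^k (x^k-1)}{k}$ once more, determines the constant of integration, and then reads off \emph{both} identities from the single resulting polynomial identity: the first by evaluating at $x=0$ (which isolates the constant $C=\sum_{k=1}^n {n\choose k}\frac{(-1)^{k+1}}{k+1}=\frac{n}{n+1}$), the second by equating coefficients of $x$. You instead obtain the first identity by integrating the binomial expansion of $(1-x)^n$ over $[0,1]$ and stripping off the $k=0$ term --- a completely self-contained and more elementary argument that bypasses the paper's machinery entirely --- and the second by evaluating (\ref{heartsuit}) at $x=0$, where the factor $\frac{x^{r+1}-1}{x-1}=1+x+\cdots+x^r$ is genuinely a polynomial equal to $1$, so the evaluation is legitimate as you note. (In fact your second computation is essentially evaluating the pre-integration identity of Lemma 3 at $x=0$, so it is very close in spirit to the source identity, just without the extra integration step.) The trade-off: your version is shorter and each identity stands on its own, while the paper's version exhibits both identities as the constant and linear coefficients of one integrated identity, which fits its running theme of extracting congruences and sums from successive integrations of a single polynomial identity. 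Both arguments are complete and correct.
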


\noindent {\bf Proof.} The polynomial identity
$$
\sum_{k=1}^n \frac{(1-x)^k}{k} = \sum_{k=1}^n {n \choose k}\frac{(-1)^k (x^k-1)}{k}.
$$
can be integrated to yield
$$
\sum_{k=1}^n \frac{(1-x)^{k+1}}{k(k+1)} = \sum_{k=1}^n {n \choose k} \frac{(-1)^{k+1}}{k} \left(\frac{x^{k+1}}{k+1} - x\right) + C,
$$
where we get $C = \sum_{k=1}^n {n \choose k} \frac{(-1)^{k+1}}{k+1}$
by putting $x=0$. The value at $x=0$ implies the first identity:
$$
\sum_{k=1}^n {n \choose k} \frac{(-1)^{k+1}}{k+1} =\frac{n}{n+1}.
$$
Equating the coefficients of $x$ on both sides of the polynomial
identity gives us the second identity:
$$
H_n:=\sum_{k=1}^n \frac{1}{k} = \sum_{k=1}^n {n \choose k}
\frac{(-1)^{k+1}}{k}.
$$

\section{Sums of higher harmonic numbers}

\noindent We obtained an identity for harmonic numbers $H_n$ above.
In this section, we prove more general identities for the harmonic
numbers of higher order. Let
$$H_k^{(r)}:=\sum_{i=1}^k 1/i^r$$ be the
$k$-th harmonic number of order $r$. In \cite{Mneimneh}, for a
positive integer $n$ and $0\le q\le 1$, it is shown that
\begin{equation}
\sum_{k=0}^n H_k\binom{n}{k}(1-q)^k q^{n-k}=H_n-\sum_{j=1}^n\frac{q^j}{j}\,.
\label{eq:mneimneh}
\end{equation}
This relation is derived by the author from an interesting
probabilistic analysis. In this section, we obtain formulae
generalizing (\ref{eq:mneimneh}).

\begin{theorem}
\begin{align}
&\sum_{k=0}^n H_k^{(r)}\binom{n}{k}(1-q)^k q^{n-k}\notag\\
&=H_n^{(r)}-\sum_{j=1}^n\left(\sum_{l=0}^{j-1}(-1)^{j-l-1}\binom{n-l-1}{n-j}\binom{n}{l}\frac{1}{(n-l)^{r-1}}\right)\frac{q^j}{j}\,.
\label{eq:101}
\end{align}
\label{th:rrr}
\end{theorem}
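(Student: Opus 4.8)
The plan is to treat the claimed formula as a polynomial identity in $q$ (both sides are polynomials of degree $n$) and to reduce it to two checks: equality of the two sides at $q=0$, and equality of their derivatives. Since a polynomial is determined by its value at one point together with its derivative, this suffices. The advantage of passing to derivatives is that differentiation will lower the order from $r$ to $r-1$, which is precisely how the exponent $r-1$ in the asserted coefficients arises.

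First I would swap the order of summation on the left, writing $H_k^{(r)}=\sum_{i=1}^k i^{-r}$ with $H_0^{(r)}=0$, to obtain
$$\sum_{k=0}^n H_k^{(r)}\binom{n}{k}(1-q)^k q^{n-k}=\sum_{i=1}^n\frac{1}{i^r}\,T_i(q),\qquad T_i(q):=\sum_{k=i}^n\binom{n}{k}(1-q)^k q^{n-k}.$$
At $q=0$ only the $k=n$ term of the original sum survives, so the left side equals $H_n^{(r)}$, which matches the right side of the theorem at $q=0$ (the $j$-sum there vanishes). It remains to match the derivatives.

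Next I would differentiate. The key input is the classical telescoping identity for the binomial tail: with $p=1-q$, consecutive terms in the derivative cancel to leave $\frac{d}{dp}\sum_{k=i}^n\binom{n}{k}p^k(1-p)^{n-k}=i\binom{n}{i}p^{i-1}(1-p)^{n-i}$, so by the chain rule $T_i'(q)=-\,i\binom{n}{i}(1-q)^{i-1}q^{n-i}$. The factor $i$ cancels one power of $i$ in $1/i^r$, so the derivative of the left side is $-\sum_{i=1}^n \frac{\binom{n}{i}}{i^{r-1}}(1-q)^{i-1}q^{n-i}$. Differentiating the right side of the theorem gives $-\sum_{j=1}^n c_j\,q^{j-1}$, where $c_j:=\sum_{l=0}^{j-1}(-1)^{j-l-1}\binom{n-l-1}{n-j}\binom{n}{l}\frac{1}{(n-l)^{r-1}}$ is the bracketed inner sum. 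Hence the theorem follows once I establish the polynomial identity $\sum_{i=1}^n \frac{\binom{n}{i}}{i^{r-1}}(1-q)^{i-1}q^{n-i}=\sum_{j=1}^n c_j\,q^{j-1}$.

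Finally I would verify this last identity by extracting the coefficient of $q^{j-1}$ on the left. Expanding $(1-q)^{i-1}$, that coefficient is $\sum_i \frac{\binom{n}{i}}{i^{r-1}}(-1)^{j-n+i-1}\binom{i-1}{j-n+i-1}$, and the substitution $l=n-i$ rewrites it as $\sum_{l=0}^{j-1}(-1)^{j-l-1}\binom{n-l-1}{j-l-1}\binom{n}{l}\frac{1}{(n-l)^{r-1}}$. This is exactly $c_j$ after the binomial symmetry $\binom{n-l-1}{j-l-1}=\binom{n-l-1}{n-j}$, which holds because $(n-l-1)-(j-l-1)=n-j$. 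The main obstacle is the bookkeeping in this last step, namely getting the summation ranges and signs right in the coefficient extraction and recognizing that it is precisely this symmetry that converts the naturally occurring $\binom{n-l-1}{j-l-1}$ into the stated $\binom{n-l-1}{n-j}$; once the reduction to derivatives is in place, everything else is routine.
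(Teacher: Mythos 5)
Your proof is correct, and it takes a genuinely different route from the paper's. The paper expands $(1-q)^k q^{n-k}$ in powers of $q$, interchanges the order of summation using $\binom{n}{k}\binom{k}{l}=\binom{n}{l}\binom{n-l}{n-k}$, and evaluates the resulting alternating partial sums via $\sum_{\nu=0}^l(-1)^\nu\binom{j}{\nu}=(-1)^l\binom{j-1}{l}$ to reach the intermediate form
\[
H_n^{(r)}-\sum_{j=1}^n\binom{n}{j}\left(\sum_{l=0}^{j-1}(-1)^{j-l-1}\binom{j-1}{l}\frac{1}{(n-l)^r}\right)q^j,
\]
which is then converted term by term into the stated coefficients via $\binom{n}{j}\binom{j-1}{l}\frac{1}{n-l}=\frac{1}{j}\binom{n-l-1}{n-j}\binom{n}{l}$. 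You instead pin down the polynomial by its value at $q=0$ and its derivative, using the classical telescoping identity $\frac{d}{dp}\sum_{k\ge i}\binom{n}{k}p^k(1-p)^{n-k}=i\binom{n}{i}p^{i-1}(1-p)^{n-i}$; the factor $i$ cancels one power in $i^{-r}$ and so explains directly where the exponent $r-1$ in the asserted coefficients comes from, after which the coefficient extraction with $l=n-i$ and the symmetry $\binom{n-l-1}{j-l-1}=\binom{n-l-1}{n-j}$ are routine and correct as you describe them. Your route is arguably more conceptual and avoids the double-sum bookkeeping; the paper's route has the side benefit that the inner sum $\binom{n}{j-1}\sum_{l}(-1)^{j-l-1}\binom{j-1}{l}(n-l)^{-r}$ appearing in its intermediate form is exactly the difference expression that gets reused in the later proofs for $r=2,3,4$. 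All the individual steps in your argument (the evaluation at $q=0$ with $H_0^{(r)}=0$, the telescoping derivative of the binomial tail, and the final coefficient match) check out.
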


In particular, when $r=1$, we find the following relation. Thus, the formula (\ref{eq:mneimneh}) is recovered.

\begin{Lem}
\begin{equation}
\sum_{l=0}^{j-1}(-1)^{j-l-1}\binom{n-l-1}{n-j}\binom{n}{l}=1\,.
\label{eq:1r1}
\end{equation}
\label{lem:r1}
\end{Lem}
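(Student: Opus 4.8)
The plan is to convert the binomial coefficient into a coefficient-extraction operator and reduce the whole assertion to evaluating a single signed sum. Writing $\binom{n-l-1}{n-j} = [x^{n-j}](1+x)^{n-l-1}$ and factoring the sign as $(-1)^{j-l-1} = (-1)^{j-1}(-1)^{l}$, the left-hand side of (\ref{eq:1r1}) becomes
$$(-1)^{j-1}\,[x^{n-j}]\sum_{l=0}^{j-1}(-1)^{l}\binom{n}{l}(1+x)^{n-l-1}.$$
So everything hinges on the coefficient of $x^{n-j}$ in the truncated sum $\sum_{l=0}^{j-1}(-1)^l\binom{n}{l}(1+x)^{n-l-1}$.

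First I would pass to the full sum over $0 \le l \le n$, which admits a closed form: by the binomial theorem,
$$\sum_{l=0}^{n}(-1)^{l}\binom{n}{l}(1+x)^{n-l-1} = (1+x)^{-1}\bigl((1+x)-1\bigr)^{n} = \frac{x^{n}}{1+x},$$
viewed as a formal power series. Since $x^n/(1+x)$ involves only powers $x^{m}$ with $m \ge n$, its coefficient of $x^{n-j}$ vanishes for every $j$ with $1 \le j \le n$.

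The key is now to relate this full sum to the truncated one by examining which terms actually contribute to $[x^{n-j}]$. For $0 \le l \le n-1$ the factor $(1+x)^{n-l-1}$ is a genuine polynomial, and $[x^{n-j}](1+x)^{n-l-1} = \binom{n-l-1}{n-j}$ is zero once $l \ge j$; thus the only surviving polynomial terms are exactly those with $0 \le l \le j-1$. The remaining term $l=n$ must be read as the power series $(1+x)^{-1} = \sum_{k \ge 0}(-1)^k x^k$, whose coefficient of $x^{n-j}$ is $(-1)^{n-j}$; this boundary term contributes $(-1)^{n}\binom{n}{n}(-1)^{n-j} = (-1)^{j}$. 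Combining these with the vanishing of the full coefficient gives
$$0 = \sum_{l=0}^{j-1}(-1)^{l}\binom{n}{l}\binom{n-l-1}{n-j} + (-1)^{j},$$
so the truncated sum equals $(-1)^{j-1}$. Multiplying by the prefactor $(-1)^{j-1}$ yields $1$, as claimed.

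The only delicate point — and the step I expect to be the main obstacle — is the handling of the $l=n$ term: it is precisely because $(1+x)^{-1}$ has a nonzero coefficient of $x^{n-j}$ (whereas every polynomial term with $l \ge j$ vanishes) that the truncated partial sum fails to be zero and instead picks up the value $(-1)^{j-1}$. One must therefore keep careful track of this boundary contribution, and of the difference between the polynomial interpretation of $\binom{n-l-1}{n-j}$ for $l \le n-1$ and its formal power series meaning at $l=n$.
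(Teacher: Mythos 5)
Your argument is correct, and it is a genuinely different route from the one in the paper. You extract $[x^{n-j}]$ from the formal power series identity $\sum_{l=0}^{n}(-1)^{l}\binom{n}{l}(1+x)^{n-l-1}=x^{n}/(1+x)$, observe that the polynomial terms with $j\le l\le n-1$ contribute nothing to that coefficient, and isolate the boundary term $l=n$, which contributes $(-1)^{j}$; this pins the truncated sum at $(-1)^{j-1}$ in one stroke. All the individual steps check out: for $0\le l\le j-1$ the coefficient $\binom{n-l-1}{n-j}$ is the ordinary binomial coefficient appearing in the lemma, for $j\le l\le n-1$ it vanishes since $n-l-1<n-j$, and $[x^{n-j}](1+x)^{-1}=(-1)^{n-j}$ as a formal power series. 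The paper instead sets $A(n,j)$ equal to the left-hand side, introduces the companion sum $B(n,j)=\sum_{l=0}^{j}(-1)^{j-l}\binom{n-l}{n-j}\binom{n}{l}$, proves $B\equiv 0$ by a recursion in $n$ that telescopes down to $B(j,j)=(1-1)^{j}=0$, and then uses Pascal's rule to show $A(n,j+1)-A(n,j)=B(n,j)=0$, so that $A(n,j)=A(n,1)=1$. Your approach buys a shorter, single-computation proof that also explains structurally why the answer is $1$ rather than $0$ (the lone non-polynomial term at $l=n$); the paper's recursive scheme is more pedestrian but sets up the difference operators $A(n,j)-A(n,j-1)$ that are reused verbatim for the higher-order analogues $C(n,j)$, $D(n,j)$, $\mathfrak D(n,j)$ in the cases $r=2,3,4$, which is presumably why the authors chose it.
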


When $r=2$, we find the following relation. Here, $(n)_j=n(n-1)\cdots(n-j+1)$ ($j\ge 1$) is the falling factorial with $(n)_0=1$, and $\stf{n}{k}$ denotes the (unsigned) Stirling number of the first kind, arising from the relation
$(x)_n=\sum_{k=0}^n(-1)^{n-k}\stf{n}{k}x^k$.

\begin{Lem}
\begin{multline}
\sum_{l=0}^{j-1}(-1)^{j-l-1}\binom{n-l-1}{n-j}\binom{n}{l}\frac{1}{n-l}=H_n-H_{n-j}\\
=\frac{1}{(n)_j}\sum_{\nu=0}^{j-1}(-1)^{j-\nu-1}(\nu+1)\stf{j}{\nu+1}n^\nu\,.
\label{eq:1r2}
\end{multline}
\label{lem:r2}
\end{Lem}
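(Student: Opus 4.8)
The plan is to establish the two asserted equalities separately. The second equality, which expresses $H_n-H_{n-j}$ through Stirling numbers, is the more transparent one, and I would dispatch it first by logarithmic differentiation of the falling factorial. The first equality, identifying the alternating binomial sum with $H_n-H_{n-j}$, I would prove by induction on $j$, showing that consecutive differences telescope to the harmonic increments $\frac{1}{n-j+1}$.

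For the second equality, observe that
$$
H_n-H_{n-j}=\sum_{k=0}^{j-1}\frac{1}{n-k}
$$
is precisely the value at $x=n$ of the logarithmic derivative $\frac{d}{dx}\log(x)_j=\sum_{k=0}^{j-1}\frac{1}{x-k}$ of $(x)_j=x(x-1)\cdots(x-j+1)$. Hence $H_n-H_{n-j}=(x)_j'\big|_{x=n}\big/(n)_j$. Differentiating the defining expansion $(x)_j=\sum_{k=0}^j(-1)^{j-k}\stf{j}{k}x^k$ and reindexing $k=\nu+1$ gives $(x)_j'=\sum_{\nu=0}^{j-1}(-1)^{j-\nu-1}(\nu+1)\stf{j}{\nu+1}x^\nu$; evaluating at $x=n$ and dividing by $(n)_j$ produces the right-hand side exactly.

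For the first equality, write $S_j$ for the left-hand sum and note $S_0=0=H_n-H_n$. Using $\binom{n-l-1}{n-j}=\binom{n-l-1}{j-l-1}$, I would compute the difference $S_j-S_{j-1}$: the sign patterns align so that Pascal's rule $\binom{n-l-1}{j-l-1}+\binom{n-l-1}{j-l-2}=\binom{n-l}{j-l-1}$ collapses the two sums into
$$
S_j-S_{j-1}=\sum_{l=0}^{j-1}(-1)^{j-l-1}\binom{n-l}{j-l-1}\binom{n}{l}\frac{1}{n-l}.
$$
The key simplification is the subset-of-a-subset identity $\binom{n}{l}\binom{n-l}{j-l-1}=\binom{n}{j-1}\binom{j-1}{l}$, which strips the $j$-dependence from inside and yields $S_j-S_{j-1}=\binom{n}{j-1}\sum_{l=0}^{j-1}(-1)^{j-l-1}\binom{j-1}{l}\frac{1}{n-l}$.

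Finally I would evaluate this alternating sum. Writing $\frac{1}{n-l}=\int_0^1 t^{n-l-1}\,dt$ turns $\sum_{l=0}^{j-1}(-1)^l\binom{j-1}{l}\frac{1}{n-l}$ into $\int_0^1 t^{n-j}(t-1)^{j-1}\,dt$, a Beta integral equal to $(-1)^{j-1}(n-j)!(j-1)!/n!$. Combining signs gives $\sum_{l=0}^{j-1}(-1)^{j-l-1}\binom{j-1}{l}\frac{1}{n-l}=(n-j)!(j-1)!/n!$, so that $S_j-S_{j-1}=\binom{n}{j-1}(n-j)!(j-1)!/n!=\frac{1}{n-j+1}$. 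Telescoping from $S_0=0$ then yields $S_j=\sum_{i=n-j+1}^{n}\frac{1}{i}=H_n-H_{n-j}$, completing the argument. The main obstacle is this middle stage: recognizing that the $j$-dependent upper binomial can be peeled off via Pascal's rule followed by the subset identity, after which the residual alternating sum reduces to a standard Beta-integral evaluation. Careful sign bookkeeping through these reindexings is where errors would most likely creep in.
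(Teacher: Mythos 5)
Your proposal is correct and follows essentially the same route as the paper: the second equality via differentiating the Stirling expansion of $(x)_j$, and the first via telescoping $S_j-S_{j-1}$ using Pascal's rule, the subset-of-a-subset identity, and an integral evaluation of the residual alternating sum. The only difference is that you evaluate that sum as the Beta integral $\int_0^1 t^{n-j}(t-1)^{j-1}\,dt=(-1)^{j-1}(j-1)!(n-j)!/n!$, which is a cleaner and more elementary substitute for the paper's indefinite-integral computation via the Gauss hypergeometric function ${}_2F_1$.
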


Note that
$$
\binom{n-l-1}{n-j}\binom{n}{l}\frac{1}{n-l}\ne \frac{l+1}{(n)_j}\stf{j}{l+1}n^l\,.
$$
Hence, we have the following formula.

\begin{Cor}
\begin{multline}
\sum_{k=0}^n H_k^{(2)}\binom{n}{k}(1-q)^k q^{n-k}=H_n^{(2)}-\sum_{j=1}^n(H_n-H_{n-j})\frac{q^j}{j}\\
=H_n^{(2)}-\sum_{j=1}^n\left(\frac{1}{(n)_j}\sum_{\nu=0}^{j-1}(-1)^{j-\nu-1}(\nu+1)\stf{j}{\nu+1}n^\nu\right)\frac{q^j}{j}\,.
\label{eq:104}
\end{multline}
\label{cor:r2}
\end{Cor}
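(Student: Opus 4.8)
The plan is to obtain this corollary purely by specialization, since both of its ingredients are already established. First I would set $r=2$ in Theorem~\ref{th:rrr}. On the left-hand side this reproduces exactly the sum $\sum_{k=0}^n H_k^{(2)}\binom{n}{k}(1-q)^k q^{n-k}$ appearing in the corollary, and the leading term on the right becomes $H_n^{(2)}$. The only point deserving attention is the inner coefficient of $q^j/j$: at $r=2$ the exponent $r-1$ collapses to $1$, so the bracketed expression $\sum_{l=0}^{j-1}(-1)^{j-l-1}\binom{n-l-1}{n-j}\binom{n}{l}\frac{1}{(n-l)^{r-1}}$ reduces to $\sum_{l=0}^{j-1}(-1)^{j-l-1}\binom{n-l-1}{n-j}\binom{n}{l}\frac{1}{n-l}$.

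Next I would recognize this last sum as precisely the quantity evaluated in Lemma~\ref{lem:r2}. Substituting its first evaluation, $H_n-H_{n-j}$, into the coefficient of $q^j/j$ yields the first asserted equality, $H_n^{(2)}-\sum_{j=1}^n(H_n-H_{n-j})\frac{q^j}{j}$. Substituting instead the second evaluation furnished by Lemma~\ref{lem:r2}, namely $\frac{1}{(n)_j}\sum_{\nu=0}^{j-1}(-1)^{j-\nu-1}(\nu+1)\stf{j}{\nu+1}n^\nu$, produces the Stirling-number form, which is the second asserted equality. Thus both displayed expressions follow from a single specialization together with the two closed forms already proved for this coefficient.

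Because the genuine combinatorial content has been isolated upstream in Theorem~\ref{th:rrr} and Lemma~\ref{lem:r2}, I do not expect any real obstacle at this step: the corollary is a direct substitution. The one thing worth verifying carefully is the index alignment, namely that the summation range $0\le l\le j-1$ and the binomial-and-factor structure appearing in Theorem~\ref{th:rrr} at $r=2$ coincide termwise with the sum named in Lemma~\ref{lem:r2}; this is immediate once the exponent $r-1=1$ is noted. Consequently the proof amounts to citing the two earlier results and reading off both closed forms for the coefficient of $q^j/j$.
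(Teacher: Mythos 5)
Your proposal is correct and matches the paper's (implicit) derivation exactly: the corollary is obtained by setting $r=2$ in Theorem~1, so that the inner coefficient becomes $\sum_{l=0}^{j-1}(-1)^{j-l-1}\binom{n-l-1}{n-j}\binom{n}{l}\frac{1}{n-l}$, and then substituting the two closed forms $H_n-H_{n-j}$ and $\frac{1}{(n)_j}\sum_{\nu=0}^{j-1}(-1)^{j-\nu-1}(\nu+1)\stf{j}{\nu+1}n^\nu$ supplied by Lemma~4. The index check you flag is indeed the only thing to verify, and it is immediate.
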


\begin{proof}[Proof of Theorem \ref{th:rrr}.]
We shall show
\begin{align}
&\sum_{k=0}^n H_k^{(r)}\binom{n}{k}(1-q)^k q^{n-k}\notag\\
&=H_n^{(r)}-\sum_{j=1}^n\binom{n}{j}\left(\sum_{l=0}^{j-1}(-1)^{j-l-1}\binom{j-1}{l}\frac{1}{(n-l)^r}\right)q^j\,.
\label{eq:100}
\end{align}
We have
\begin{align*}
&\sum_{k=0}^n H_k^{(r)}\binom{n}{k}(1-q)^k q^{n-k}
=\sum_{k=0}^n H_k^{(r)}\binom{n}{k}\sum_{l=0}^k(-1)^{k-l}\binom{k}{l}q^{n-l}\\
&=\sum_{l=0}^n q^{n-l}\binom{n}{l}\sum_{k=l}^n(-1)^{k-l}\binom{n-l}{n-k}H_k^{(r)}\\
&=\sum_{j=0}^n\binom{n}{j}q^j\sum_{\nu=0}^j(-1)^{j-\nu}\binom{j}{\nu}H_{n-\nu}^{(r)}\\
&=H_n^{(r)}-\sum_{j=1}^n(-1)^{j-1}\binom{n}{j}q^j\sum_{\nu=0}^j(-1)^{\nu}\binom{j}{\nu}\sum_{l=0}^{n-1}\frac{1}{(n-l)^r}\,.
\end{align*}
Since
$$
\sum_{\nu=0}^l(-1)^\nu\binom{j}{\nu}=(-1)^l\binom{j-1}{l}\quad(\text{proved by induction on $l(\ge 0)$})
$$
and
$$
\sum_{\nu=0}^j(-1)^\nu\binom{j}{\nu}=(1-1)^j=0\,,
$$
we have
\begin{align*}
&\sum_{\nu=0}^j(-1)^{\nu}\binom{j}{\nu}\sum_{l=0}^{n-1}\frac{1}{(n-l)^r}\\
&=\sum_{l=0}^{j-1}\left(\sum_{\nu=0}^l(-1)^\nu\binom{j}{\nu}\right)\frac{1}{(n-l)^r}+\sum_{l=j}^{n-1}\left(\sum_{\nu=0}^j(-1)^\nu\binom{j}{\nu}\right)\frac{1}{(n-l)^r}\\
&=\sum_{l=0}^{j-1}(-1)^l\binom{j-1}{l}\frac{1}{(n-l)^r}\,.
\end{align*}
By (\ref{eq:100}), it is straightforward to get (\ref{eq:101}).
\end{proof}

\begin{proof}[Proof of Lemma \ref{lem:r1}.]
Put
\begin{align*}
A(n,j)&=\sum_{l=0}^{j-1}(-1)^{j-l-1}\binom{n-l-1}{n-j}\binom{n}{l}\,,\\
B(n,j)&=\sum_{l=0}^{j}(-1)^{j-l}\binom{n-l}{n-j}\binom{n}{l}\,.
\end{align*}
Since,
\begin{align*}
B(n+1,j)&=\frac{n+1}{n-j+1}B(n,j)=\frac{(n+1)n}{(n-j+1)(n-j)}B(n-1,j)\\
&=\cdots=\frac{(n+1)n\cdots(j+1)}{(n-j+1)!}B(j,j)\\
&=\frac{(n+1)!}{(n-j+1)!j!}\sum_{l=0}^j(-1)^{j-l}\binom{j}{l}\\
&=\binom{n+1}{j}(1-1)^j=0\,,
\end{align*}
we have
$$
A(n,j+1)-A(n,j)=B(n,j)=0\,.
$$
Hence, we obtain
$$
A(n,j)=A(n,j-1)=\cdots=A(n,1)=(-1)^0\binom{n-1}{n-1}\binom{n}{0}=1\,.
$$
\end{proof}

\begin{proof}[Proof of Lemma \ref{lem:r2}.]
The formula (\ref{eq:1r2}) is yielded from the definition of the Stirling numbers of the first kind:
\begin{align*}
(x)_j&=\sum_{k=0}^j(-1)^{j-k}\stf{j}{k}x^k\\
&=\sum_{\nu=0}^{j-1}(-1)^{j-\nu-1}\stf{j}{\nu+1}x^{\nu+1}\quad(\text{if}\, j\ge 1)\,.
\end{align*}
Differentiating both sides with respect to $x$ gives
$$
(x)_j\sum_{l=0}^{j-1}\frac{1}{x-l}=\sum_{\nu=0}^{j-1}(-1)^{j-\nu-1}(\nu+1)\stf{j}{\nu+1}x^\nu\,.
$$
Thus, the right-hand side of (\ref{eq:1r2}) is equal to
$$
\sum_{l=0}^{j-1}\frac{1}{n-l}=H_n-H_{n-j}\,.
$$
Put the left-hand side of (\ref{eq:1r2}) as
$$
C(n,j):=\sum_{l=0}^{j-1}(-1)^{j-l-1}\binom{n-l-1}{n-j}\binom{n}{l}\frac{1}{n-l}\,.
$$
Then
\begin{align*}
&C(n,j)-C(n,j-1)\\
&=\sum_{l=0}^{j-2}(-1)^{j-l-1}\left(\binom{n-l-1}{n-j}\binom{n}{l}+\binom{n-l-1}{n-j+1}\binom{n}{l}\right)\frac{1}{n-l}\\
&\quad +\binom{n}{j-1}\frac{1}{n-j+1}\\
&=\sum_{l=0}^{j-1}(-1)^{j-l-1}\binom{n-l}{n-j+1}\binom{n}{l}\frac{1}{n-l}\\
&=\binom{n}{j-1}\sum_{l=0}^{j-1}(-1)^{j-l-1}\binom{j-1}{l}\frac{1}{n-l}\,.
\end{align*}
Now,
\begin{align*}
&\sum_{l=0}^{j-1}(-1)^{j-l-1}\binom{j-1}{l}\frac{1}{n-l}\\
&=\left.\int\sum_{l=0}^{j-1}(-1)^{j-l-1}\binom{j-1}{l}x^{n-l-1}d x\right|_{x=1}\\
&=\left.\int x^{n-1}\left(1-\frac{1}{x}\right)^{j-1}d x\right|_{x=1}\\
&=\left.\left(1-\frac{1}{x}\right)^{j}\frac{{}_2 F_1(-j+1,n-j+1;n-j+2;x)}{(x-1)^j(n-j+1)}\right|_{x=1}\\
&=\frac{\Gamma(j)\Gamma(n-j+2)}{(n-j+1)\Gamma(n+1)}=\frac{(j-1)!(n-j)!}{n!}\,,
\end{align*}
where ${}_2 F_1(a,b;c;z)$ is the Gauss hypergeometric function.
Hence,
$$
C(n,j)-C(n,j-1)=\frac{1}{n-j+1}\,.
$$
Therefore,
\begin{align}
C(n,j)&=C(n,j-1)+\frac{1}{n-j+1}\notag\\
&=C(n,j-2)+\frac{1}{n-j+2}+\frac{1}{n-j+1}\notag\\
&=\cdots\notag\\
&=C(n,1)+\frac{1}{n-1}+\cdots+\frac{1}{n-j+2}+\frac{1}{n-j+1}\notag\\
&=\sum_{l=0}^{j-1}\frac{1}{n-l}\,.
\label{eq:r333}
\end{align}
\end{proof}

\subsection{The case $r=3$}

When $r=3$, we have the following.

\begin{Prop}
\begin{align*}
&\sum_{l=0}^{j-1}(-1)^{j-l-1}\binom{n-l-1}{n-j}\binom{n}{l}\frac{1}{(n-l)^2}\\
&=\frac{(H_n-H_{n-j})^2}{2}+\frac{H_n^{(2)}-H_{n-j}^{(2)}}{2}\,.
%&=H_n^{(2)}-H_{n-j}^{(2)}-\sum_{m=0}^{j-1}\frac{H_{j-m-1}-H_m}{n-m}\\
%&=H_n^{(2)}-H_{n-j}^{(2)}-\sum_{m=0}^{\fl{j/2}-1}\frac{h_{j-2 m-1}^{(m+1)}}{\binom{j-m-1}{m}}\left(\frac{1}{n-m}-\frac{1}{n-j+m+1}\right)\,,
\end{align*}
%where $h_k^{(r)}$ is the $k$-th hyperharmonic number of order $r$, defined by
%$$
%h_k^{(r)}=\sum_{i=1}^k h_i^{(r-1)}\quad(r\ge 1)\quad\hbox{with}\quad h_k^{(0)}=\frac{1}{k}\,.
%$$
\label{prp:r3}
\end{Prop}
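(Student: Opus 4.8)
The plan is to mirror the proof of the $r=2$ case (Lemma \ref{lem:r2}) by setting up a first-order recurrence in $j$. Write
$$D(n,j):=\sum_{l=0}^{j-1}(-1)^{j-l-1}\binom{n-l-1}{n-j}\binom{n}{l}\frac{1}{(n-l)^2},$$
which is exactly the left-hand side to be evaluated. Carrying out the difference $D(n,j)-D(n,j-1)$ verbatim as in the computation of $C(n,j)-C(n,j-1)$ for $r=2$, I would combine $\binom{n-l-1}{n-j}$ and $\binom{n-l-1}{n-j+1}$ via Pascal's rule into $\binom{n-l}{n-j+1}$, absorb the boundary term $l=j-1$, and then apply the absorption identity $\binom{n}{l}\binom{n-l}{n-j+1}=\binom{n}{j-1}\binom{j-1}{l}$. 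This collapses the difference to
$$D(n,j)-D(n,j-1)=\binom{n}{j-1}\sum_{l=0}^{j-1}(-1)^{j-l-1}\binom{j-1}{l}\frac{1}{(n-l)^2}.$$

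The crux is to evaluate the inner sum $S_2:=\sum_{l=0}^{j-1}(-1)^{j-l-1}\binom{j-1}{l}\frac{1}{(n-l)^2}$; this is the analogue of the inner value $\frac{(j-1)!(n-j)!}{n!}$ that appeared for $r=2$, now with a squared denominator, and it is the only genuinely new obstacle. Rather than iterate the hypergeometric integral of Lemma \ref{lem:r2} (which becomes unwieldy with the square), I would differentiate the partial-fraction identity
$$\sum_{l=0}^{j-1}(-1)^{j-l-1}\binom{j-1}{l}\frac{1}{x-l}=\frac{(j-1)!}{x(x-1)\cdots(x-j+1)}$$
with respect to $x$. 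The left-hand side yields $-S_2$ (with $n$ replaced by $x$), while logarithmic differentiation of the right-hand side produces $-\frac{(j-1)!}{x(x-1)\cdots(x-j+1)}\sum_{l=0}^{j-1}\frac{1}{x-l}$. Setting $x=n$ and recognizing $\sum_{l=0}^{j-1}\frac{1}{n-l}=H_n-H_{n-j}$ gives $S_2=\frac{(j-1)!(n-j)!}{n!}(H_n-H_{n-j})$. Absorbing the prefactor $\binom{n}{j-1}$ then reduces the recurrence to the clean form
$$D(n,j)-D(n,j-1)=\frac{H_n-H_{n-j}}{n-j+1}.$$

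With the recurrence in hand I would telescope from the base value $D(n,1)=1/n^2$ to get $D(n,j)=\sum_{k=1}^{j}\frac{H_n-H_{n-k}}{n-k+1}$. Setting $a_k:=H_n-H_{n-k}=\sum_{m=0}^{k-1}\frac{1}{n-m}$, so that $a_k-a_{k-1}=\frac{1}{n-k+1}$ and $a_0=0$, the summand equals $a_k(a_k-a_{k-1})$. The elementary identity $a_k(a_k-a_{k-1})=\tfrac12(a_k^2-a_{k-1}^2)+\tfrac12(a_k-a_{k-1})^2$ makes the sum telescope to $\tfrac12 a_j^2+\tfrac12\sum_{k=1}^{j}\frac{1}{(n-k+1)^2}$. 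Since $a_j=H_n-H_{n-j}$ and $\sum_{k=1}^{j}\frac{1}{(n-k+1)^2}=H_n^{(2)}-H_{n-j}^{(2)}$, this is precisely the claimed right-hand side. I expect the differentiation step for $S_2$ to be the one requiring care, but it dispatches the computation in a single line and otherwise the argument runs in strict parallel to the $r=2$ proof.
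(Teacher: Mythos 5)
Your proof is correct, and although it shares the paper's overall architecture---form the first difference $D(n,j)-D(n,j-1)$, reduce it to $\frac{H_n-H_{n-j}}{n-j+1}$, and telescope from $D(n,1)=1/n^2$---you execute both key steps by genuinely different means. The paper never evaluates the inner sum $\sum_{l=0}^{j-1}(-1)^{j-l-1}\binom{j-1}{l}\frac{1}{(n-l)^2}$ in closed form; instead it introduces a second auxiliary quantity $E(n,j)=(n-j+1)\bigl(D(n,j)-D(n,j-1)\bigr)$, shows $E(n,j)-E(n,j-1)=\frac{1}{n-j+1}$ by invoking Lemma \ref{lem:r1}, and telescopes a second time. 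You obtain the same recurrence in one stroke by differentiating the partial-fraction identity $\sum_{l=0}^{j-1}(-1)^{j-l-1}\binom{j-1}{l}\frac{1}{x-l}=\frac{(j-1)!}{x(x-1)\cdots(x-j+1)}$ and setting $x=n$; this is cleaner, and it also sidesteps the hypergeometric-function computation the paper used for the analogous inner sum at $r=2$. For the final summation the paper expands $D(n,j)$ as $H_n(H_n-H_{n-j})-\bigl(\frac{H_n^2-H_n^{(2)}}{2}-\frac{H_{n-j}^2-H_{n-j}^{(2)}}{2}\bigr)$ via the classical partial sums of $H_{k-1}/k$, whereas your identity $a_k(a_k-a_{k-1})=\frac12(a_k^2-a_{k-1}^2)+\frac12(a_k-a_{k-1})^2$ telescopes directly and is self-contained. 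The trade-off is that the paper's double-difference scheme is the one that iterates to $r=4$ and beyond (each auxiliary difference lowers $r$ by one and reuses the previous case), while your route would require taking higher derivatives of the partial-fraction identity to follow suit; for the single case $r=3$ your argument is the shorter of the two.
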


Therefore, we have the following formula.

\begin{Cor}
\begin{align*}
&\sum_{k=0}^n H_k^{(3)}\binom{n}{k}(1-q)^k q^{n-k}\\
&=H_n^{(3)}-\sum_{j=1}^n\left(\frac{(H_n-H_{n-j})^2}{2}+\frac{H_n^{(2)}-H_{n-j}^{(2)}}{2}\right)\frac{q^j}{j}\,.
%&=H_n^{(3)}-\sum_{j=1}^n\left(H_n^{(2)}-H_{n-j}^{(2)}-\sum_{m=0}^{j-1}\frac{H_{j-m-1}-H_m}{n-m}\right)\frac{q^j}{j}\\
%&=H_n^{(3)}\\
%&\quad -\sum_{j=1}^n\left(H_n^{(2)}-H_{n-j}^{(2)}-\sum_{m=0}^{\fl{j/2}-1}\frac{h_{j-2 m-1}^{(m+1)}}{\binom{j-m-1}{m}}\left(\frac{1}{n-m}-\frac{1}{n-j+m+1}\right)\right)\frac{q^j}{j}\,.
\end{align*}
\label{cor:r2}
\end{Cor}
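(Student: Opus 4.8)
The plan is to recognize this Corollary as the immediate specialization of Theorem~\ref{th:rrr} to the value $r=3$, once the inner binomial coefficient sum has been replaced by the closed form furnished by Proposition~\ref{prp:r3}. All the combinatorial labor has in fact already been carried out upstream: Theorem~\ref{th:rrr} rewrites the $q$-binomial transform $\sum_{k=0}^n H_k^{(r)}\binom{n}{k}(1-q)^k q^{n-k}$ as $H_n^{(r)}$ minus a single sum over $j$ whose $j$-th coefficient is $\sum_{l=0}^{j-1}(-1)^{j-l-1}\binom{n-l-1}{n-j}\binom{n}{l}(n-l)^{-(r-1)}$, and Proposition~\ref{prp:r3} evaluates precisely that coefficient when $r=3$. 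So the Corollary is obtained by pure substitution, with no further manipulation.

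Concretely, I would put $r=3$ in~(\ref{eq:101}). Then $r-1=2$, so the $j$-th coefficient inside the sum is
$$
\sum_{l=0}^{j-1}(-1)^{j-l-1}\binom{n-l-1}{n-j}\binom{n}{l}\frac{1}{(n-l)^{2}},
$$
which is exactly the left-hand side appearing in Proposition~\ref{prp:r3}. Substituting its right-hand side $\frac{(H_n-H_{n-j})^2}{2}+\frac{H_n^{(2)}-H_{n-j}^{(2)}}{2}$ converts
$$
\sum_{k=0}^n H_k^{(3)}\binom{n}{k}(1-q)^k q^{n-k}=H_n^{(3)}-\sum_{j=1}^n\left(\sum_{l=0}^{j-1}(-1)^{j-l-1}\binom{n-l-1}{n-j}\binom{n}{l}\frac{1}{(n-l)^{2}}\right)\frac{q^j}{j}
$$
directly into the claimed formula. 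The only bookkeeping point to verify is that the exponent $r-1$ in Theorem~\ref{th:rrr} really specializes to $2$ here, matching the $(n-l)^{-2}$ weight in Proposition~\ref{prp:r3}; this is immediate.

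Thus there is no genuine obstacle at the level of the Corollary: its proof is a single citation of Theorem~\ref{th:rrr} together with Proposition~\ref{prp:r3}, both of which I may assume. The real content lies entirely in Proposition~\ref{prp:r3}, and the Corollary merely records the resulting evaluation, continuing the visible pattern in which the $j$-th coefficient is the constant $1$ when $r=1$ (Lemma~\ref{lem:r1}), the first difference $H_n-H_{n-j}$ when $r=2$ (Lemma~\ref{lem:r2}), and the symmetric quadratic combination $\frac{1}{2}(H_n-H_{n-j})^2+\frac{1}{2}(H_n^{(2)}-H_{n-j}^{(2)})$ when $r=3$. If one wished to establish Proposition~\ref{prp:r3} from scratch, the natural route would imitate the telescoping computation of Lemma~\ref{lem:r2}: set $C(n,j)$ to be the left-hand sum, reduce $C(n,j)-C(n,j-1)$ by Pascal's rule on $\binom{n-l-1}{n-j}$ to a sum weighted by $\binom{j-1}{l}(n-l)^{-2}$, evaluate that reduced sum by an integral representation of the same type as in Lemma~\ref{lem:r2} (the squared denominator now contributing a harmonic-number factor), and sum the resulting telescope. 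For the Corollary itself, however, none of this is required.
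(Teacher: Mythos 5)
Your proposal is correct and matches the paper exactly: the paper presents this Corollary as an immediate consequence (``Therefore, we have the following formula'') of substituting the evaluation in Proposition~\ref{prp:r3} into the $r=3$ case of Theorem~\ref{th:rrr}, which is precisely your argument. No further comment is needed.
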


\begin{proof}[Proof of Proposition \ref{prp:r3}.]
Put
$$
D(n,j):=\sum_{l=0}^{j-1}(-1)^{j-l-1}\binom{n-l-1}{n-j}\binom{n}{l}\frac{1}{(n-l)^2}\,.
$$
Then
\begin{equation}
D(n,j)-D(n,j-1)=\binom{n}{j-1}\sum_{l=0}^{j-1}(-1)^{j-l-1}\binom{j-1}{l}\frac{1}{(n-l)^2}\,.
\label{eq:rr44}
\end{equation}
We shall prove that
\begin{equation}
D(n,j)-D(n,j-1)
=\frac{1}{n-j+1}(H_n-H_{n-j})\,.
%=\frac{1}{(n-j+1)^2}-\sum_{\ell=0}^{j-2}\frac{1}{(j-\ell-1)(n-\ell)}+\frac{H_{j-1}}{n-j+1}\,.
\label{eq:rr4}
\end{equation}
By (\ref{eq:rr4}), we get
\begin{align*}
D(n,j)&=\left(\frac{H_n}{n-j+1}-\frac{H_{n-j}}{n-j+1}\right)+\left(\frac{H_n}{n-j+2}-\frac{H_{n-j+1}}{n-j+2}\right)\\
&\quad +\cdots+\left(\frac{H_n}{n-1}-\frac{H_{n-2}}{n-1}\right)+\left(\frac{H_n}{n}-\frac{H_{n-1}}{n}\right)\\
&=H_n\left(\frac{1}{n-j+1}+\frac{1}{n-j+2}+\cdots+\frac{1}{n-1}+\frac{1}{n}\right)\\
&\quad-\left(\frac{H_{n-j}}{n-j+1}+\frac{H_{n-j+1}}{n-j+2}+\cdots+\frac{H_{n-2}}{n-1}+\frac{H_{n-1}}{n}\right)\\
&=H_n(H_n-H_{n-j})-\left(\frac{H_n^2-H_n^{(2)}}{2}-\frac{H_{n-j}^2-H_{n-j}^{(2)}}{2}\right)\\
&=\frac{(H_n-H_{n-j})^2}{2}+\frac{H_n^{(2)}-H_{n-j}^{(2)}}{2}\,.
%=\frac{1}{(n-j+1)^2}+\frac{1}{(n-j+2)^2}+\cdots++\frac{1}{(n-1)^2}+D(n,1)\\
%&\quad -\left(\frac{1}{j-1}+\frac{1}{j-2}+\cdots+1\right)\frac{1}{n}\\
%&\quad -\left(\frac{1}{j-2}+\frac{1}{j-3}+\cdots+1-H_1\right)\frac{1}{n-1}\\
%&\quad -\left(\frac{1}{j-3}+\frac{1}{j-4}+\cdots+1-H_2\right)\frac{1}{n-2}\\
%&\quad -\cdots\\
%&\quad -\left(\frac{1}{2}+1-H_{j-3}\right)\frac{1}{n-j+3}\\
%&\quad -\left(1-H_{j-2}\right)\frac{1}{n-j+2}\\
%&\quad +\frac{H_{j-1}}{n-j+1}\\
%&=\sum_{\ell=0}^{j-1}\frac{1}{(n-\ell)^2}-\sum_{m=0}^{j-1}\frac{H_{j-m-1}-H_m}{n-m}\\
%&=H_n^{(2)}-H_{n-j}^{(2)}-\sum_{m=0}^{j-1}\frac{H_{j-m-1}-H_m}{n-m}\,.
\end{align*}
In order to prove (\ref{eq:rr4}), we put
$$
E(n,j)=(n-j+1)\bigl(D(n,j)-D(n,j-1)\bigr)\,.
$$
Then by (\ref{eq:rr44}) and Lemma \ref{lem:r1} (\ref{eq:1r1}), we have
\begin{align*}
&E(n,j)-E(n,j-1)\\
&=\frac{1}{n-j+1}\sum_{l=0}^{j-2}(-1)^{j-l-1}\binom{n-l-1}{n-j}\binom{n}{l}+\frac{1}{n-j+1}\binom{n}{j-1}\\
&=\frac{1}{n-j+1}\sum_{l=0}^{j-1}(-1)^{j-l-1}\binom{n-l-1}{n-j}\binom{n}{l}=\frac{1}{n-j+1}\,.
\end{align*}
Hence, by $D(n,1)=1/n^2$, we get
\begin{align*}
&D(n,j)-D(n,j-1)=
\frac{E(n,j)}{n-j+1}\\
&=\frac{1}{n-j+1}\left(\frac{1}{n-j+1}+\frac{1}{n-j+2}+\cdots+\frac{1}{n-1}+E(n,1)\right)\\
&=\frac{1}{n-j+1}(H_n-H_{n-j})\,.
%\\
%&=\frac{1}{n-j+1}\left(\frac{1}{n-j+1}-\sum_{\ell=0}^{j-2}\left(\frac{1}{j-\ell-1}-\frac{1}{n-\ell}\right)+H_{j-1}\right)\\
%&=\frac{1}{(n-j+1)^2}-\sum_{\ell=0}^{j-2}\frac{1}{(j-\ell-1)(n-\ell)}+\frac{H_{j-1}}{n-j+1}\,,
\end{align*}
which is the right-hand side of (\ref{eq:rr4}).
\end{proof}

\subsection{The case $r=4$}

When $r=4$, we have the following.

\begin{Prop}
\begin{align*}
&\sum_{l=0}^{j-1}(-1)^{j-l-1}\binom{n-l-1}{n-j}\binom{n}{l}\frac{1}{(n-l)^3}\\
&=\frac{(H_n-H_{n-j})^3}{6}+\frac{(H_n-H_{n-j})(H_n^{(2)}-H_{n-j}^{(2)})}{2}+\frac{H_n^{(3)}-H_{n-j}^{(3)}}{3}\,.
\end{align*}
\label{prp:r4}
\end{Prop}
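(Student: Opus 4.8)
The organizing principle is that the right-hand sides of Lemma \ref{lem:r2}, Proposition \ref{prp:r3}, and the present Proposition \ref{prp:r4} are the complete homogeneous symmetric polynomials $h_{r-1}$ of degree $r-1$ in the quantities $\frac{1}{n},\frac{1}{n-1},\dots,\frac{1}{n-j+1}$. Writing $p_i=H_n^{(i)}-H_{n-j}^{(i)}=\sum_{l=0}^{j-1}\frac{1}{(n-l)^i}$ for the power sums, Newton's identity $h_3=\frac{p_1^3}{6}+\frac{p_1p_2}{2}+\frac{p_3}{3}$ is precisely the asserted formula. This suggests proving, for the sums $S_r(n,j)=\sum_{l=0}^{j-1}(-1)^{j-l-1}\binom{n-l-1}{n-j}\binom{n}{l}\frac{1}{(n-l)^{r-1}}$, the single recursion $S_r(n,j)-S_r(n,j-1)=\frac{1}{n-j+1}S_{r-1}(n,j)$, of which Lemma \ref{lem:r2} ($r=2$) and Proposition \ref{prp:r3} ($r=3$) are the first instances; the case $r=4$ is the claim here, with $S_3=D$ the quantity of Proposition \ref{prp:r3}.

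First I would set $F(n,j):=S_4(n,j)$ and run the Pascal's-rule manipulation that produced (\ref{eq:rr44}), which is insensitive to the exponent on $n-l$. Using $\binom{n-l-1}{n-j}+\binom{n-l-1}{n-j+1}=\binom{n-l}{n-j+1}$ and $\binom{n-l}{n-j+1}\binom{n}{l}=\binom{n}{j-1}\binom{j-1}{l}$, this gives $F(n,j)-F(n,j-1)=\binom{n}{j-1}\sum_{l=0}^{j-1}(-1)^{j-l-1}\binom{j-1}{l}\frac{1}{(n-l)^3}$, the exact analogue of (\ref{eq:rr44}).

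Next, mirroring the device of the $r=3$ proof, I would put $G(n,j):=(n-j+1)\bigl(F(n,j)-F(n,j-1)\bigr)$ and compute $G(n,j)-G(n,j-1)$ by Pascal's rule $\binom{j-1}{l}=\binom{j-2}{l}+\binom{j-2}{l-1}$ together with the boundary term. Just as $E(n,j)-E(n,j-1)$ collapsed through Lemma \ref{lem:r1} to $\frac{1}{n-j+1}$, here the combinatorial factors should reorganize, now via Lemma \ref{lem:r2}, to leave $G(n,j)-G(n,j-1)=\frac{1}{n-j+1}(H_n-H_{n-j})$, which by (\ref{eq:rr4}) equals $D(n,j)-D(n,j-1)$. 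Since $G(n,1)=n\cdot\frac{1}{n^3}=\frac{1}{n^2}=D(n,1)$, the sequences $G$ and $D$ obey one recursion with one initial value, so $G\equiv D$ and therefore
$$F(n,j)-F(n,j-1)=\frac{1}{n-j+1}\,D(n,j).$$
To finish by induction on $j$ it then suffices to check that the claimed closed form satisfies this recursion: writing $a_i=H_n^{(i)}-H_{n-j+1}^{(i)}$ and $t=\frac{1}{n-j+1}$, so that $H_n^{(i)}-H_{n-j}^{(i)}=a_i+t^i$, one expands $\frac{(a_1+t)^3}{6}+\frac{(a_1+t)(a_2+t^2)}{2}+\frac{a_3+t^3}{3}$ and subtracts its value at $t=0$; the $t^3$ contributions combine with coefficient $\frac16+\frac12+\frac13=1$, and the remainder regroups as $t\bigl(\frac{a_1^2}{2}+a_1t+t^2+\frac{a_2}{2}\bigr)=t\cdot\frac{(a_1+t)^2+(a_2+t^2)}{2}=\frac{1}{n-j+1}D(n,j)$, with both $F$ and the ansatz vanishing at $j=0$.

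The hard part will be the collapse in the $G$-recursion: verifying that, after the Pascal split, the surviving combinatorial sums are exactly the ones evaluated by Lemma \ref{lem:r1} and by the $r=3$ difference (\ref{eq:rr4}), with the $\frac{1}{(n-l)^2}$-weights reorganizing correctly one level higher than in the $r=3$ argument. This is the only place demanding genuine care; the symmetric-function viewpoint guarantees in advance what the outcome must be, giving a reliable check at each step.
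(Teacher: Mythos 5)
Your proposal is correct and follows essentially the same route as the paper: the same Pascal-rule first difference, the same auxiliary quantity $\mathfrak E(n,j)=(n-j+1)\bigl(\mathfrak D(n,j)-\mathfrak D(n,j-1)\bigr)$ whose own first difference collapses via Lemma \ref{lem:r2} to $\frac{H_n-H_{n-j}}{n-j+1}$, and the same identification of $\mathfrak E$ with the $r=3$ quantity $D(n,j)$ (you get it from matching recursions and the initial value $1/n^2$, the paper by telescoping). The only divergence is the last step, where the paper telescopes $\mathfrak D$ and invokes $\sum_{1\le i_1<i_2<i_3\le n}\frac{1}{i_1i_2i_3}=\frac16\bigl(H_n^3+2H_n^{(3)}-3H_nH_n^{(2)}\bigr)$, while you verify directly that the asserted closed form satisfies $\mathfrak D(n,j)-\mathfrak D(n,j-1)=\frac{D(n,j)}{n-j+1}$; your algebra for that check is correct.
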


Similarly to the case $r=3$, put
$$
\mathfrak D(n,j):=\sum_{l=0}^{j-1}(-1)^{j-l-1}\binom{n-l-1}{n-j}\binom{n}{l}\frac{1}{(n-l)^3}\,.
$$
Then
$$
\mathfrak D(n,j)-\mathfrak D(n,j-1)=\binom{n}{j-1}\sum_{l=0}^{j-1}(-1)^{j-l-1}\binom{j-1}{l}\frac{1}{(n-l)^3}\,.
$$
Put
$$
\mathfrak E(n,j)=(n-j+1)\bigl(\mathfrak D(n,j)-\mathfrak D(n,j-1)\bigr)\,.
$$
Then by (\ref{eq:r333}), we have
\begin{align*}
&\mathfrak E(n,j)-\mathfrak E(n,j-1)\\
&=\frac{1}{n-j+1}\sum_{l=0}^{j-1}(-1)^{j-l-1}\binom{n-l-1}{n-j}\binom{n}{l}\frac{1}{n-l}\\
&=\frac{C(n,j)}{n-j+1}=\frac{1}{n-j+1}\sum_{\ell=0}^{j-1}\frac{1}{n-\ell}=\frac{H_n-H_{n-j}}{n-j+1}\,.
\end{align*}
Thus, by $\mathfrak E(n,1)=1/n^2$, we get
\begin{align*}
&\mathfrak E(n,j)\\
&=\frac{H_n-H_{n-j}}{n-j+1}+\frac{H_n-H_{n-j+1}}{n-j+2}+\cdots+\frac{H_n-H_{n-2}}{n-1}+E(n,1)\\
&=H_n(H_n-H_{n-j})-\left(\frac{H_n^2-H_n^{(2)}}{2}-\frac{H_{n-j}^2-H_{n-j}^{(2)}}{2}\right)\,.
\end{align*}
Hence, by $\mathfrak D(n,1)=1/n^3$, we have
\begin{align*}
&\mathfrak D(n,j)-\mathfrak D(n,j-1)=
\frac{\mathfrak E(n,j)}{n-j+1}\\
&=\frac{H_n(H_n-H_{n-j})}{n-j+1}-\frac{1}{n-j+1}\left(\frac{H_n^2-H_n^{(2)}}{2}-\frac{H_{n-j}^2-H_{n-j}^{(2)}}{2}\right)\,.
\end{align*}
Therefore,
\begin{align*}
&\mathfrak D(n,j)\\
&=\frac{H_n^2+H_n^{(2)}}{2(n-j+1)}-\frac{H_n H_{n-j}}{n-j+1}+\frac{H_{n-j}^2-H_{n-j}^{(2)}}{2(n-j+1)}\\
&\quad +\frac{H_n^2+H_n^{(2)}}{2(n-j+2)}-\frac{H_n H_{n-j+1}}{n-j+2}+\frac{H_{n-j+1}^2-H_{n-j+1}^{(2)}}{2(n-j+2)}\\
&\quad +\cdots\\
&\quad +\frac{H_n^2+H_n^{(2)}}{2(n-1)}-\frac{H_n H_{n-2}}{n-1}+\frac{H_{n-2}^2-H_{n-2}^{(2)}}{2(n-1)}\\
&\quad +\mathfrak D(n,1)\\
&=\frac{(H_n^2+H_n^{(2)})(H_n-H_{n-j})}{2}-H_n\left(\frac{H_n^2-H_n^{(2)}}{2}-\frac{H_{n-j}^2-H_{n-j}^{(2)}}{2}\right)\\
&\quad +\sum_{1\le i_1<i_2<i_3\le n}\frac{1}{i_1 i_2 i_3}-\sum_{1\le i_1<i_2<i_3\le n-j}\frac{1}{i_1 i_2 i_3}\,.
\end{align*}
Since for $n\ge 3$
\begin{align*}
H_n^3&=H_n^{(3)}+6\sum_{1\le i_1<i_2<i_3\le n}\frac{1}{i_1 i_2 i_3}\\
&\quad +3\left(\frac{1}{1^2}\left(H_n-\frac{1}{1}\right)+\frac{1}{2^2}\left(H_n-\frac{1}{2}\right)+\cdots+\frac{1}{n^2}\left(H_n-\frac{1}{n}\right)\right)\\
&=H_n^{(3)}+6\sum_{1\le i_1<i_2<i_3\le n}\frac{1}{i_1 i_2 i_3}+3(H_n^{(2)}H_n-H_n^{(3)})\,.
\end{align*}
we have
$$
\sum_{1\le i_1<i_2<i_3\le n}\frac{1}{i_1 i_2 i_3}=\frac{1}{6}(H_n^3+2 H_n^{(3)}-3 H_n^{(2)}H_n)\,.
$$
Therefore,
\begin{align*}
&\mathfrak D(n,j)\\
&=\frac{(H_n^2+H_n^{(2)})(H_n-H_{n-j})}{2}-H_n\left(\frac{H_n^2-H_n^{(2)}}{2}-\frac{H_{n-j}^2-H_{n-j}^{(2)}}{2}\right)\\
&\quad +\frac{1}{6}(H_n^3+2 H_n^{(3)}-3 H_n^{(2)}H_n-H_{n-j}^3-2 H_{n-j}^{(3)}+3 H_{n-j}^{(2)}H_{n-j})\\
&=\frac{(H_n-H_{n-j})^3}{6}+\frac{(H_n-H_{n-j})(H_n^{(2)}-H_{n-j}^{(2)})}{2}+\frac{H_n^{(3)}-H_{n-j}^{(3)}}{3}\,.
\end{align*}

\subsection{Speculations on harmonic numbers and partitions}

In general, let us put
$$
\mathcal
D_r(n,j):=\sum_{l=0}^{j-1}\binom{n-l-1}{n-j}\binom{n}{l}\frac{1}{(n-l)^{r-1}}.
$$
Then, arguing similarly to the above cases where $1\le r\le 4$, we
have
\begin{align*}
&\mathcal D_5(n,j)=\frac{(H_n-H_{n-j})^4}{4!}+\frac{(H_n-H_{n-j})^2(H_n^{(2)}-H_{n-j}^{(2)})}{4}\\
&\quad +\frac{(H_n-H_{n-j})(H_n^{(3)}-H_{n-j}^{(3)})}{3}+\frac{(H_n^{(2)}-H_{n-j}^{(2)})^2}{4}+\frac{H_n^{(4)}-H_{n-j}^{(4)}}{8}\,,\\
&\mathcal D_6(n,j)=\frac{(H_n-H_{n-j})^5}{5!}+\frac{(H_n-H_{n-j})^3(H_n^{(2)}-H_{n-j}^{(2)})}{12}\\
&\quad +\frac{(H_n-H_{n-j})^2(H_n^{(3)}-H_{n-j}^{(3)})}{6}+\frac{(H_n-H_{n-j})(H_n^{(2)}-H_{n-j}^{(2)})^2}{8}\\
&\quad +\frac{(H_n-H_{n-j})(H_n^{(4)}-H_{n-j}^{(4)}}{4}+\frac{(H_n^{(2)}-H_{n-j}^{(2)})(H_n^{(3)}-H_{n-j}^{(3)})}{6}\\
&\quad +\frac{H_n^{(5)}-H_{n-j}^{(5)}}{5}\,,\\
&\mathcal D_7(n,j)=\frac{(H_n-H_{n-j})^6}{6!}+\frac{(H_n-H_{n-j})^4(H_n^{(2)}-H_{n-j}^{(2)})}{48}\\
&\quad +\frac{(H_n-H_{n-j})^3(H_n^{(3)}-H_{n-j}^{(3)})}{18}+\frac{(H_n-H_{n-j})^2(H_n^{(2)}-H_{n-j}^{(2)})^2}{16}\\
&\quad +\frac{(H_n-H_{n-j})^2(H_n^{(4)}-H_{n-j}^{(4)})}{8}+\frac{(H_n-H_{n-j})(H_n^{(2)}-H_{n-j}^{(2)})(H_n^{(3)}-H_{n-j}^{(3)})}{6}\\
&\quad +\frac{(H_n-H_{n-j})(H_n^{(5)}-H_{n-j}^{(5)})}{5}+\frac{(H_n^{(2)}-H_{n-j}^{(2)})^3}{48}\\
&\quad +\frac{(H_n^{(2)}-H_{n-j}^{(2)})(H_n^{(4)}-H_{n-j}^{(4)})}{8}+\frac{(H_n^{(3)}-H_{n-j}^{(3)})^2}{18}+\frac{H_n^{(6)}-H_{n-j}^{(6)}}{6}\,.
\end{align*}
It is interesting to observe that the number of terms of each of the right-hand sides of $\mathcal D_r(n,j)$ is equal to the number of partitions of $r$ ($r=1,2,3,4,5,6,7$), respectively. In addition, the same terms of generalized harmonic numbers appear in \cite{Choi,Hoffman}:
\begin{align*}
&\sum_{n=1}^\infty\frac{H_n}{(n+1)(n+2)}=1\,,\\
&\sum_{n=1}^\infty\frac{(H_n)^2-H_n^{(2)}}{2(n+1)(n+2)}=1\,,\\
&\sum_{n=1}^\infty\frac{(H_n)^3-3 H_n H_n^{(2)}+2 H_n^{(3)}}{3!(n+1)(n+2)}=1\,,\\
&\sum_{n=1}^\infty\frac{(H_n)^4-6(H_n)^2 H_n^{(2)}+8 H_n H_n^{(3)}+3(H_n^{(2)})^2-6 H_n^{(4)}}{4!(n+1)(n+2)}\,,\\
&\sum_{n=1}^\infty\frac{1}{5!(n+1)(n+2)}\biggl((H_n)^5-10(H_n)^3 H_n^{(2)}+20(H_n)^2 H_n^{(3)})^2\\
&\quad +15 H_n(H_n^{(2)}-30 H_n H_n^{(4)}-20 H_n^{(2)}H_n^{(3)}+24 H_n^{(5)}\biggr)=1\,,\\
&\sum_{n=1}^\infty\frac{1}{6!(n+1)(n+2)}\biggl((H_n)^6-15(H_n)^4 H_n^{(2)}+40(H_n)^3 H_n^{(3)}\\
&\quad +45(H_n)^2(H_n^{(2)})^2-90(H_n)^2 H_n^{(4)}-120 H_n H_n^{(2)}H_n^{(3)}+144 H_n H_n^{(5)}\\
&\quad -15(H_n^{(2)})^3+90 H_n^{(2)}H_n^{(4)}+40(H_n^{(3)})^2-120 H_n^{(5)}
\biggr)=1\,.
\end{align*}

No simple closed form has been found, but $\mathcal D_r(n,j)$ ($r\ge 2$) can be expressed by a combinatorial sum (\cite[Proposition 1 (17)]{Hoffman}):
\begin{multline*}
\mathcal D_{r+1}(n,j)
=\sum_{i_1+2 i_2+3 i_3+\cdots=r}\frac{1}{i_1!i_2!i_3!\cdots}\\
\times\left(\frac{H_n-H_{n-j}}{1}\right)^{i_1}\left(\frac{H_n^{(2)}-H_{n-j}^{(2)}}{2}\right)^{i_2}\left(\frac{H_n^{(3)}-H_{n-j}^{(3)}}{3}\right)^{i_3}\cdots
\end{multline*}
and in terms of the determinant (\cite[Ch. I \S 2]{MacDonald}):
\begin{multline*}
\mathcal D_{r+1}(n,j)=\frac{1}{r!}\\
\times\left|
\begin{array}{ccccc}
H_n-H_{n-j}&-1&0&\cdots&0\\
H_n^{(2)}-H_{n-j}^{(2)}&H_n-H_{n-j}&-2&\cdots&0\\
\vdots&\vdots&\vdots&\ddots&\vdots\\
H_n^{(r-1)}-H_{n-j}^{(r-1)}&H_n^{(r-2)}-H_{n-j}^{(r-2)}&H_n^{(r-3)}-H_{n-j}^{(r-3)}&\cdots&-r+1\\
H_n^{(r)}-H_{n-j}^{(r)}&H_n^{(r-1)}-H_{n-j}^{(r-1)}&H_n^{(r-2)}-H_{n-j}^{(r-2)}&\cdots&H_n-H_{n-j}
\end{array}
\right|\,.
\end{multline*}
See also \cite{ChCh,CoCa}.

\section*{Acknowledgments}
This work was done during the first author's visit to the Indian Statistical Institute Bangalore, India in July-August 2023. He is grateful for his second author's hospitality.

\end{document}